\newtheorem{lemma}{Lemma}
\newtheorem{theorem}{Theorem}
\newtheorem{defin}{Definition}
\newtheorem{prop}{Proposition}
\theoremstyle{remark}
\newtheorem*{remark}{\bf Remark}
\newtheorem{example}{\bf Example}
\newcommand{\Z}{\mathbb{Z}}
\newcommand{\R}{\mathbb{R}}
\newcommand{\N}{\mathbb{N}}
\patchcmd{\section}{\scshape}{\bfseries}{}{}
\renewcommand{\@secnumfont}{\bfseries}
\numberwithin{equation}{section}
\numberwithin{lemma}{section}
\numberwithin{theorem}{section}
\numberwithin{prop}{section}
\def\multsum{\mathop{\sum\cdots \sum}\limits}
\begin{document}

\title{Sums of Kloosterman sums over primes in an arithmetic progression}

\author{Alexander Dunn}
\address{Department of Mathematics, University of Illinois, 1409 West Green
Street, Urbana, IL 61801, USA}
\email{ajdunn2@illinois.edu}

\author{Alexandru Zaharescu}
\address{Department of Mathematics, University of Illinois, 1409 West Green
Street, Urbana, IL 61801, USA}
\address{Simon Stoilow Institute of Mathematics of the Romanian Academy, P.O. Box 1-764, RO-014700 Bucharest, Romania}
\email{zaharesc@illinois.edu}

\subjclass[2010]{Primary 11L07}
\keywords{Arithmetic progression, bilinear forms, Kloosterman sum, prime}

\maketitle

\begin{abstract}
For $q$ prime, $X \geq 1$ and coprime $u,v \in \N$ we estimate the sums
\begin{equation*}
\sum_{\substack{p \leq X \\ \substack p \equiv u \hspace{-0.25cm} \mod{v} \\ p \text{ prime}}} \text{Kl}_2(p;q),
\end{equation*}
where $\text{Kl}_2(p;q)$ denotes a normalised Kloosterman sum with modulus $q$. This is a sparse analogue of a recent theorem due to Blomer, Fouvry, Kowalski, Michel and Mili\'{c}evi\'{c} showing cancellation amongst sums of Kloosterman sums over primes in short intervals. We use an optimisation argument inspired by Fouvry, Kowalski and Michel. Our argument compares three different bounds for bilinear forms involving Kloosterman sums. The first input in this method is a bilinear bound we prove using uniform asymptotics for oscillatory integrals due to Kiral, Petrow and Young. In contrast with the case when the sum runs over all primes, we exploit cancellation over a sum of stationary phase integrals that result from a Voronoi type summation. The second and third inputs are deep bilinear bounds for Kloosterman sums due to Fouvry--Kowalski--Michel and Kowalski--Michel--Sawin. 
\end{abstract}

\section{Introduction}
Cancellation among exponential sums defined over finite fields plays a central role in number theory. A key example is Weil's bound for exponential sums of rational functions, see \cite[p.~1]{FKM}. It is natural to ask whether cancellation persists when such a summation is restricted to primes, or primes in short intervals. For instance, one can see \cite[Th\'{e}or\`{e}me~1.1]{FM}. 

It is also natural to ask the same question for other functions defined on $\mathbb{F}_q$, where $q$ is a prime. Important examples are the hyper--Kloosterman sums in $m-1$ variables introduced by Deligne, and later studied by Katz \cite{K}. For $(n,q)=1$, these are defined by  
\begin{equation} \label{hyper}
\text{Kl}_m(n;q)=\frac{1}{q^{\frac{m-1}2}} \multsum_{\substack{x_1\cdots
  x_m=n \\ x_i \in \mathbb{F}_q}} e \Big(\frac{x_1+\cdots+x_m}{q} \Big)
\end{equation} 
These fall into a general class of functions called \emph{trace weights} \cite{FKM3}, studied extensively by Fouvry, Kowalski and Michel \cite{FKM}. These authors prove general results asserting cancellation amongst trace weights over primes \cite[Theorem~1.5]{FKM}. As a consequence they obtain results for sums of classical Kloosterman sums ($m=2$ in \eqref{hyper}) \cite[Corollary~1.13]{FKM}.  Blomer, Fouvry, Kowalski, Michel and Mili\'{c}evi\'{c} recently improved \cite[Corollary~1.13]{FKM} in a special issue of the Proceedings of the Steklov Institute of Mathematics dedicated in the memory of I.~M~ Vinogradov \cite{BFKPM, BFKPM2}.

\begin{theorem} \cite[Theorem~1.7]{BFKPM2} \label{motivthm}
Let $q$ be a prime number. For every $1 \leq X \leq q$ and $\varepsilon>0$, we have 
\begin{equation} \label{motiv}
\sum_{\substack{p \leq X  \\ p \emph{ prime}}} \emph{Kl}_2(p;q) \ll_{\varepsilon} q^{\frac{1}{6}+\varepsilon} X^{\frac{7}{9}}.
\end{equation}
\end{theorem}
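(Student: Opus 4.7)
The plan is to reduce the prime-weighted sum to linear and bilinear sums via a combinatorial identity such as Vaughan's or Heath--Brown's, and then bound the resulting pieces using complete-sum and bilinear-form estimates for Kloosterman sums. Passing from $\sum_{p\leq X}\text{Kl}_2(p;q)$ to a sum weighted by the von Mangoldt function $\Lambda(n)$ is standard (Abel summation together with Chebyshev estimates), and I would first insert a smooth dyadic partition of unity so that the analysis is carried out at a single scale $n\asymp N$ against a fixed test function $\psi$.

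Next I would apply Vaughan's identity with parameters $U,V\leq X^{1/2}$, which decomposes the resulting $\Lambda$-sum into a few \emph{Type I} linear pieces of the shape $\sum_{m\leq U}a_m\sum_{n}\text{Kl}_2(mn;q)\psi(n/N)$ and a \emph{Type II} bilinear piece $\sum_{m>U}\sum_{n>V}a_mb_n\text{Kl}_2(mn;q)\psi(mn/X)$, with divisor-bounded coefficients $a_m,b_n$. The Type I sums I would attack by Poisson summation modulo $q$ in the smooth variable (a Voronoi-type step for the Kloosterman transform): the Fourier dual consists of complete Kloosterman evaluations bounded by Weil's $|\text{Kl}_2(a;q)|\leq 2$, which after summing trivially in the outer variable yields a contribution of order $U\, q^{1/2+\varepsilon}$ up to logarithmic losses.

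The Type II piece is the main obstacle. Here I would invoke the deep bilinear estimates of Fouvry--Kowalski--Michel, refined by Kowalski--Michel--Sawin, for sums of the form $\sum_{m\sim M,\,n\sim N}\alpha_m\beta_n\text{Kl}_2(mn;q)$: after Cauchy--Schwarz in the longer variable, one expands the square and applies $\ell$-adic input (estimates for sums of products of additively and multiplicatively shifted Kloosterman sums) to obtain a genuine power saving in $q$ beyond the square-root from Cauchy--Schwarz. The strength and shape of this bound depend sensitively on the relative sizes of $M$, $N$ and $q$, and in the range $X\leq q$ the naive P\'olya--Vinogradov completion is lossy, so careful bookkeeping of the $q$-dependence in the FKM/KMS input is essential.

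Finally I would balance the Type I bound, which improves as $U$ shrinks, against the Type II bound, which improves as both $U$ and $V$ grow, by optimising $U$ and $V$ subject to $UV\leq X$. Equating the $X$- and $q$-exponents in the two estimates produces the claimed final bound $q^{1/6+\varepsilon}X^{7/9}$; the exponents $7/9$ and $1/6$ encode exactly the arithmetic of this optimisation, with the saving of $1/6$ (rather than the naive $1/4$ from Weil alone) reflecting the strength of the bilinear input. I expect the delicate step to be ensuring that the FKM/KMS bound remains nontrivial in the unbalanced regime where one of $M,N$ is close to $\sqrt{X}$ and $X$ is much smaller than $q$, since it is precisely this regime that forces the exponent $7/9$.
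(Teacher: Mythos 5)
Your plan follows the right template --- a combinatorial identity to reach bilinear sums, then the bilinear Kloosterman bounds of Fouvry--Kowalski--Michel and Kowalski--Michel--Sawin, then balancing --- and indeed this mirrors the structure of [BFKPM2] and of the present paper's proof of Theorem~\ref{nonsmooththm}. (Note that the present paper does not itself prove Theorem~\ref{motivthm}; it cites it, and the analogous argument is Proposition~\ref{smoothprop}.) However, the decomposition you chose is too coarse and the balance you describe does not close. A minor point first: Vaughan's Type~I coefficient lengths run up to $UV$, so that contribution is $UVq^{1/2+\varepsilon}$, not $Uq^{1/2+\varepsilon}$. More seriously, with Vaughan's two-way dichotomy the optimisation fails. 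Over the dyadic Type~II splits $U\le M$, $V\le N$, $MN\asymp X$, the bound of Theorem~\ref{FKM} has worst cases $M\asymp U$ and $N\asymp V$, and optimising $UVq^{1/2}+XU^{-1/2}+Xq^{1/4}V^{-1/2}$ over $U,V$ gives $\asymp q^{1/5}X^{4/5}$, which is $\ge X$ for all $X\le q$, i.e.\ trivial. Adding Theorem~\ref{KMS} introduces the term $X^{13/16}q^{11/64}$, which at $X\asymp q$ is $q^{63/64}$, still short of the target $q^{17/18}$ --- this same term appears in \eqref{sig3} and \eqref{total2} of this paper and is one reason Theorem~\ref{nonsmooththm} has weaker exponents than Theorem~\ref{motivthm}.

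The missing ingredient is Heath--Brown's identity with $J\ge 7$ (the paper takes $J=10$), which yields many rough factors of length $\le X^{1/J}$ and many smooth factors $N_1\ge N_2\ge\cdots$, together with a \emph{three-way} combinatorial dispatching rather than a Type~I/Type~II balance. In the notation of Section~\ref{smoothpropsec}: if $\nu_1\ge\beta$, one applies a smooth bilinear (tempered Voronoi) bound to the two largest smooth variables; if $\nu_1<\beta$ and some sub-sum $\sigma$ of the exponents lies in $[x/6-\varepsilon,x/3+\varepsilon]$, one applies Theorem~\ref{FKM} with the split $M=q^\sigma$; otherwise the constraints $\mu_i\le x/J$ with $J\ge 7$ force $\nu_1+\nu_2\ge 5x/6$ with both $\nu_1,\nu_2$ bounded by $\beta$, and one applies Theorem~\ref{KMS} with $M=N_2$. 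The parameter $\beta$ is then chosen to equalise the regimes (cf.\ \eqref{choicebeta}). The crucial flexibility --- selecting the bilinear split $M=q^\sigma$ for whichever sub-sum is available, or $M=N_2$, or falling back on the smooth bound when $N_1$ is large --- is exactly what Vaughan's fixed decomposition cannot give, and it is where the exponents $7/9$ and $1/6$ actually arise; your proposal gestures at ``equating exponents'' but the two-term Vaughan balance does not produce them.
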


The goal of this paper is to obtain a sparse analogue of Theorem \ref{motivthm}. We concentrate on the case when $p$ runs over primes in an arithmetic progression for a large range of moduli.

\begin{theorem} \label{nonsmooththm}
Let $q$ be a prime number and $u,v \in \mathbb{N}$ such that $(u,v)=1$ and $1 \leq v \leq q^{\frac{1}{100}}$. Then for all $1 \leq X \leq q$ and $\varepsilon>0$ we have
\begin{equation} \label{tradstat}
\sum_{\substack{p \leq X \\ p \equiv u \hspace{-0.25cm} \mod v \\
p \emph{ prime} }}   \emph{Kl}_2(p,q) \ll_{\varepsilon}  q^{\frac{11}{192}+\varepsilon} X^{\frac{15}{16}}.
\end{equation}
\end{theorem}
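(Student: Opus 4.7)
\medskip

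\noindent\textbf{Proof proposal.}

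The plan is to follow the three-bound optimisation scheme of Fouvry, Kowalski and Michel that the abstract alludes to. First, I would introduce a smooth dyadic partition of unity in the $p$ variable and apply Heath-Brown's identity (with four factors, say) to write the sum over primes as a bounded combination of multilinear sums of the form
\begin{equation*}
\Sigma(M_1,\ldots,M_k)=\sumdots_{\substack{m_1\sim M_1,\,\ldots,\,m_k\sim M_k \\ m_1\cdots m_k\equiv u\,(v)}} \alpha_1(m_1)\cdots \alpha_k(m_k)\,\text{Kl}_2(m_1\cdots m_k;q),
\end{equation*}
where some of the $\alpha_i$ are smooth (Type~I) and the rest are bounded and arbitrary (Type~II), with $M_1\cdots M_k\asymp X$. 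Grouping the variables suitably, each such sum can be written as a bilinear form
\begin{equation*}
\B(\alpha,\beta)=\sum_{m\sim M}\sum_{n\sim N}\alpha_m\beta_n\,\text{Kl}_2(mn;q),\qquad MN\asymp X,
\end{equation*}
with the extra congruence $mn\equiv u\pmod{v}$. This congruence is detected via multiplicative characters modulo $v$; since $v\le q^{1/100}$, the resulting loss of size $v^{O(1)}$ is affordable. It is enough, then, to produce nontrivial estimates for $\B(\alpha,\beta)$ in several ranges of $(M,N)$ and then choose the best decomposition parameters.

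The heart of the argument lies in comparing three different bounds for $\B(\alpha,\beta)$, each effective in a different regime of $(M,N)$.  The first (and novel) input is a bound obtained by Voronoi summation in the long variable: one opens $\text{Kl}_2(mn;q)$, applies Poisson summation to the $n$-sum modulo $q$, and is left with a dual sum of stationary phase oscillatory integrals twisted by hyper-Kloosterman sums. Here I would invoke the uniform asymptotic expansions of Kiral, Petrow and Young to isolate a clean oscillating main term, and then exploit cancellation in the resulting $m$-sum of stationary phase integrals — this is exactly the feature the abstract emphasises as different from the full-prime case. The second and third inputs are imported essentially as black boxes: the Fouvry--Kowalski--Michel bilinear estimate for trace functions, which is strong when $\min(M,N)$ is moderately large compared to $q^{1/2}$, and the Kowalski--Michel--Sawin bilinear estimate, which is effective in a complementary range. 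Each of the three bounds is of the shape $\B(\alpha,\beta)\ll \|\alpha\|_2\|\beta\|_2\,q^{a}M^{b}N^{c}$ for certain exponents $(a,b,c)$.

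With the three estimates in hand, I would run a linear-programming style optimisation as in Fouvry--Kowalski--Michel: for each dyadic range $(M,N)$ with $MN\asymp X$ pick the minimum of the three bounds, and then across the Heath-Brown decomposition pick the depth/partition that balances the Type~I versus Type~II contributions. Summing dyadically over $M,N$ and the congruence characters modulo $v$ and choosing the free parameters to equalise the worst cases produces the target exponents $q^{11/192+\varepsilon}X^{15/16}$; the fraction $11/192$ is precisely the break-even exponent of the three linear forms at the optimum.

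The main obstacle I anticipate is the first bilinear bound. The Voronoi step produces oscillatory integrals whose phases depend on $m$ in a way that is not purely monomial, and extracting a stationary phase main term that is uniform in $m$, $q$, and the dual frequency is delicate; bounding the subsequent $m$-sum requires an additional square-root cancellation argument (likely Cauchy--Schwarz followed by completion and a Deligne-type estimate on the resulting correlation sums of hyper-Kloosterman type). The auxiliary congruence $mn\equiv u\pmod v$ is a minor but nontrivial bookkeeping complication to track cleanly through the Voronoi transform, and one has to verify that the Kiral--Petrow--Young asymptotics remain applicable uniformly in the ranges dictated by the optimisation.
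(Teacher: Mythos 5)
Your high-level scheme is the right one (Heath--Brown decomposition, three bilinear bounds, a linear-programming optimisation over the decomposition, and a final dyadic summation), and your identifications of the second and third inputs (the Fouvry--Kowalski--Michel bound and the Kowalski--Michel--Sawin bound) are exactly what the paper uses. However there is a genuine gap in your plan for the first, novel bound, and it sits precisely where the paper's main technical contribution lies.

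You propose to detect the congruence $mn\equiv u\pmod v$ by multiplicative characters uniformly across all three bounds. The paper does this only for the FKM and KMS inputs. For the first bound, the congruence is detected \emph{additively}, writing $\mathbf{1}_{u\bmod v}=\tfrac{1}{v}\sum_{h}e(-hu/v)e(h\cdot/v)$. This is not mere bookkeeping: the additive detection injects an oscillatory factor $e(hm_1\cdots m_{10}n_1\cdots n_{10}/v)$ into the sum, which is absorbed into a weight function of the shape $W_1(x/M)W_2(y/N)e(cxy)$ with $c\asymp h\cdot(\text{complementary product})/v$. It is exactly this phase $e(cxy)$ that makes the dual side of the Voronoi summation a bona fide stationary-phase integral. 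With multiplicative characters $\chi(m)\chi(n)$ there is no such phase, the Fourier transform of the smooth weight decays rapidly with no stationary point, and there would be nothing for the Kiral--Petrow--Young machinery to grab onto. So your plan, as stated, cannot reproduce the stationary-phase main term whose further cancellation you correctly flag as the new feature.

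Your sketch of the first bound is also off in mechanism. The paper does not open $\mathrm{Kl}_2$ and Poisson-sum one variable to get hyper-Kloosterman twists. Instead it applies the two-variable \emph{tempered Voronoi summation} of Deshouillers--Iwaniec to $\sum_{m,n}K(mn)G(m,n)$ where $K(n)=\mathrm{Kl}_2(an;q)$. The Voronoi transform $\widecheck K$ of a multiplicatively shifted Kloosterman sum is essentially a Dirac delta on $n\equiv a\pmod q$, so the dual side is a constrained sum of $\widehat G(m/q,n/q)$, not a sum twisted by hyper-Kloosterman sums. Moreover, this proposition requires \emph{both} variables smooth (it is applied to the two longest smooth $n_i$'s from Heath--Brown), not a general bilinear form with arbitrary $\alpha_m,\beta_n$. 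Finally, the cancellation in the dual sum is extracted by an elementary argument --- after the KPY stationary-phase evaluation the phase is $e(-mn/(cq^2))$, one treats the long sum over $m$ as a geometric series, and controls the count of near-resonant $(r,n)$ by spacing and divisor-type estimates. No Cauchy--Schwarz followed by completion or Deligne-type input is needed or used at this stage. The quantitative output of this chain is the factor $q^{1/2}M$ savings in Proposition 7.1, and recovering the exponents $11/192$ and $15/16$ hinges on this specific shape; a different (e.g.\ square-root-cancellation) mechanism would lead to a different optimisation and would not obviously hit the same exponents.
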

We offer some conditional and unconditional remarks about the various ranges where Theorem \ref{nonsmooththm} is non-trivial. Naturally, this depends on the size
of the set of primes over which one sums on the left side of \eqref{tradstat}, which in turn may be influenced by potential Siegel zeros. Let 
\begin{equation*}
\psi(X;v,u):=\sum_{\substack{n \leq X \\ n \equiv u \hspace{-0.2cm} \mod{v}}} \Lambda(n).
\end{equation*}
\begin{itemize}
\item For $A>0$ the Siegel--Walfisz Theorem \cite[Corollary~5.29]{IK} asserts that 
\begin{equation*}
\psi(X;v,u)=\frac{X}{\phi(v)}+O_A \big(X (\log X)^{-A} \big)
\end{equation*}
for all $X \geq 2$ and $v \geq 1$ such that $(u,v)=1$. In light of the Weil bound $|\text{Kl}_2(m;q)| \leq 2$, we see that Theorem \ref{nonsmooththm} is non-trivial in the ranges $X \gg q^{\frac{11}{12}+\varepsilon}$ and $v \ll (\log X)^A$. For instance, when $v$ is fixed and $X=q$, we gain a factor of $q^{\frac{1}{192}-\varepsilon}$ over the trivial result obtained by applying the Weil bound.
\item For $A>0$, a well known consequence of the Bombieri--Vinogradov Theorem \cite[Theorem~17.1]{IK} asserts
\begin{equation} \label{bombvin}
\sum_{1 \leq v \leq X^{\frac{1}{2}-\varepsilon}} E(X;v) \ll_{A} \frac{X}{\log^A X}, \quad E(X;v):=\max_{(u,v)=1} \bigg | \psi(X;v,u)-\frac{X}{\phi(v)} \bigg |. 
\end{equation}
In the following suppose that $q^{\frac{11}{12}+\varepsilon} \ll X \leq q$. Then \eqref{bombvin} implies that
\begin{equation*}
\psi(X;v,u)=\frac{X}{\phi(v)}+O \Big( \frac{X^{\frac{15}{16}} q^{\frac{11}{192}+\varepsilon}}{\log^{10} X} \Big)
\end{equation*}
holds for all moduli $1 \leq v \leq X^{\frac{1}{2}-\varepsilon}$ with at most 
\begin{equation*}
O \Big( \frac{X^{\frac{1}{16}} q^{-\frac{11}{192}-\varepsilon}}{\log^{100} X} \Big)
\end{equation*}
exceptions. We see that Theorem \ref{nonsmooththm} is non--trivial for $X \gg q^{\frac{11}{12}+\varepsilon}$ and almost all moduli  $v \ll X^{\frac{1}{16}} q^{-\frac{11}{192}-\varepsilon}$.
\item The Generalised Riemann hypothesis for $L(s,\chi)$ with $\chi \pmod{q}$ implies that 
\begin{equation} \label{countprog}
\psi(X;v,u)=\frac{X}{\phi(v)}+O \big(X^{\frac{1}{2}} (\log X)^2 \big),
\end{equation}
where the implied constant is absolute \cite[Equation~17.4]{IK}. In light of the Weil bound (stated above), we see that Theorem \ref{nonsmooththm} is non--trivial when $X \gg q^{\frac{11}{12}+\varepsilon}$ and all moduli $v \leq X^{\frac{1}{16}} q^{-\frac{11}{192}-\varepsilon}$ by \eqref{countprog}. 
\end{itemize} 

After some adjustments, an application of Heath--Brown's identity \cite{HB} for the Von Mangoldt function reduces the proof of Theorem \ref{nonsmooththm} to bounding bilinear forms of various lengths involving Kloosterman sums. Three different bounds for such bilinear forms will be invoked. The utility of each bound will depend on the ranges of summation of the bilinear form. 

For bilinear forms whose summation variables lie a certain range, we detect the congruence condition $\mathbf{1}_{u \hspace{-0.15cm} \mod{v}}$ additively. This transfers the task at hand to bounding a bilinear form whose summand is a Kloosterman sum weighted by an oscillatory smooth function. The tempered Voronoi summation formula of Deshouillers and Iwaniec turns this bilinear form into a sum of Fourier transforms. This summation formula is given in Section \ref{voronoisec}.

 The new key idea in our work is to exploit cancellation from the sum of transforms that come out of this tempered Voronoi summation.  We appeal to the uniform asymptotics for stationary phase integrals given by Kiral, Petrow and Young \cite{KPY} to compute asymptotic main terms for these Fourier transforms. A summary of their work can be found in Section~\ref{inertos}. Section \ref{weightfn} sets up the families of weight functions that will be used. Furthermore, Sections \ref{nonstatforms} and \ref{statforms} handle the cases when the Fourier transforms of these functions are non-stationary and stationary phase respectively. The application to bounding the bilinear forms involving Kloosterman sums in question can be found in Section \ref{bilinos}.

For other ranges we detect the congruence condition $\mathbf{1}_{u \hspace{-0.15cm} \mod{v}}$ multiplicatively.  In this case we can appeal to existing bilinear bounds for Kloosterman sums due to Fouvry--Kowalski--Michel \cite{FKM} and Kowalski--Michel--Sawin \cite{KMS}. These bounds are recorded in Section \ref{bilinbounds}. Shparlinski and Zhang \cite{IZ} have also proved bounds for various bilinear forms with Kloosterman sums.

The proof of a smoothed version of Theorem \ref{nonsmooththm} is given in Section \ref{smoothpropsec}. The main idea behind this proof is a combinatorial optimisation of the three bounds discussed above. We modify an argument of Fouvry, Kowalski and Michel appearing in \cite{FKM}, and subsequently in \cite{BFKPM}. The proof of Theorem \ref{nonsmooththm} appears in Section \ref{mainthms}, and will follow from a careful choice of parameters in the work in Section \ref{smoothpropsec}.

\section{Tempered Voronoi summation} \label{voronoisec}
Let $q$ be a prime number and let $K: \mathbb{Z} \rightarrow \mathbb{C}$ be a $q$-periodic function. The \emph{normalised Fourier transform} of $K$ is the $q$-periodic function on $\mathbb{Z}$ defined by 
\begin{equation*}
\widehat{K}(h)=\frac{1}{\sqrt{q}} \sum_{n \hspace{-0.25cm} \mod{q}} K(n) e \Big( \frac{hn}{q} \Big).
\end{equation*}
The \emph{Voronoi transform} of $K$ is the $q$-periodic function on $\Z$ defined by  
\begin{equation*}
\widecheck{K}(n)=\frac{1}{\sqrt{q}} \sum_{\substack{h \hspace{-0.25cm} \mod{q} \\ (h,q)=1 }} \widehat{K}(h) e \Big( \frac{\bar{h} n}{q} \Big).
\end{equation*}

We will use the tempered Voronoi summation formula of Deshouillers and Iwaniec as given in \cite[Prop~2.2]{FKM2}.

\begin{prop} \cite[Prop~2.2]{FKM2} \label{temper}
Let $q$ be a prime number and let $K: \mathbb{Z} \rightarrow \mathbb{C}$ be a $q$-periodic function, and $G$ be a smooth function on $\mathbb{R}^2$ with compact support and Fourier transform denoted by $\widehat{G}$. Then we have 
\begin{equation} \label{voronoi}
\sum_{m,n \in \mathbb{Z}} K(mn) G(m,n)=\frac{\widehat{K}(0)}{\sqrt{q}} \sum_{m,n \in \Z} G(m,n)+\frac{1}{q} \sum_{m,n \in \Z} \widecheck{K}(mn) \widehat{G} \Big( \frac{m}{q}, \frac{n}{q} \Big).
\end{equation}
\end{prop}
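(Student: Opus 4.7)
The plan is to combine two-dimensional Poisson summation on $\Z^2$ with discrete Fourier inversion on $\Z/q\Z$. First I partition the lattice $\Z^2$ by residue classes modulo $q$: since $K$ depends only on $mn \bmod q$,
\begin{equation*}
\sum_{m,n \in \Z} K(mn) G(m,n) = \sum_{a,b \bmod q} K(ab) \sum_{\substack{m \equiv a \\ n \equiv b \bmod q}} G(m,n).
\end{equation*}
Poisson summation applied in both $m$ and $n$ rewrites the inner sum as
\begin{equation*}
\frac{1}{q^2} \sum_{m,n \in \Z} \widehat{G}\Big(\frac{m}{q},\frac{n}{q}\Big)\, e\Big(\frac{am+bn}{q}\Big),
\end{equation*}
so after interchanging summations, the identity reduces to evaluating the complete exponential sum
\begin{equation*}
T(m,n) := \sum_{a,b \bmod q} K(ab)\, e\Big(\frac{am+bn}{q}\Big).
\end{equation*}

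Next I would compute $T(m,n)$ by expanding $K(ab) = q^{-1/2}\sum_{h \bmod q}\widehat{K}(h)\, e(-hab/q)$ via Fourier inversion on $\Z/q\Z$ and splitting the $h$-sum into $h=0$ and $(h,q)=1$ (using that $q$ is prime). The $h=0$ contribution factors as $q^{-1/2}\widehat{K}(0)\sum_a e(am/q)\sum_b e(bn/q) = q^{3/2}\widehat{K}(0)\,\mathbf{1}_{q \mid m}\mathbf{1}_{q \mid n}$. For each nonzero $h$, orthogonality in $b$ forces $a \equiv \bar{h} n \bmod q$, and the residual sum in $a$ evaluates to $q\, e(\bar{h} mn/q)$. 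Assembling over $h$ and invoking the definition of $\widecheck{K}$ then yields
\begin{equation*}
T(m,n) = q^{3/2}\widehat{K}(0)\,\mathbf{1}_{q \mid m}\mathbf{1}_{q \mid n} + q\,\widecheck{K}(mn).
\end{equation*}

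Substituting into $q^{-2}\sum_{m,n}\widehat{G}(m/q,n/q)\,T(m,n)$ produces two pieces. The Voronoi piece is immediately $q^{-1}\sum_{m,n} \widecheck{K}(mn)\widehat{G}(m/q,n/q)$, matching the second term of \eqref{voronoi}. The diagonal piece becomes $q^{-1/2}\widehat{K}(0)\sum_{m',n' \in \Z}\widehat{G}(m',n')$ after restricting to multiples of $q$ and reindexing; applying Poisson summation on $\Z^2$ in reverse then converts this into $q^{-1/2}\widehat{K}(0)\sum_{m,n}G(m,n)$, recovering the first main term. The main technical hurdle is purely bookkeeping: the asymmetric $\sqrt{q}$ normalisations in $\widehat{K}$ and $\widecheck{K}$, the $q^2$ from two-dimensional Poisson, and the need to invoke Poisson summation both forward and backward with a consistent Fourier-transform convention all have to be reconciled. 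Absolute convergence at every step is automatic from the compact support of $G$ together with the Schwartz decay of $\widehat{G}$.
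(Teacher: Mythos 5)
Your proof is correct. The paper itself does not supply a proof of this proposition; it is imported verbatim with a citation to \cite{FKM2}, so there is no in-paper argument to compare against. Your route (partition $\Z^2$ into residue classes mod $q$, apply two-dimensional Poisson summation to the inner lattice sum, and then compute the complete exponential sum $T(m,n)=\sum_{a,b}K(ab)\,e((am+bn)/q)$ by expanding $K$ via discrete Fourier inversion and splitting off $h=0$) is the standard derivation of the Deshouillers--Iwaniec tempered Voronoi formula and is essentially what \cite{FKM2} does. All the bookkeeping checks out: the normalisation $\widehat{K}(h)=q^{-1/2}\sum_n K(n)e(hn/q)$ gives inversion $K(n)=q^{-1/2}\sum_h \widehat{K}(h)e(-hn/q)$; orthogonality in $b$ pins $a\equiv\bar h n\pmod q$ for $h\neq 0$, and the $a$-sum yields $q\,e(\bar h mn/q)$, so indeed $T(m,n)=q^{3/2}\widehat{K}(0)\mathbf{1}_{q\mid m}\mathbf{1}_{q\mid n}+q\,\widecheck{K}(mn)$; and the diagonal piece collapses by Poisson summation in reverse. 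One minor remark on convergence: while compact support of $G$ makes the left side a finite sum and smoothness gives rapid decay of $\widehat{G}$, a careful write-up should note that the interchange of the $(a,b)$-sum with the Poisson-dual $(m,n)$-sum is justified precisely by this rapid decay together with the finiteness (size $q^2$) of the residue sum --- but you already flag this, so the argument is complete.
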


If $K$ is a multiplicatively shifted Kloosterman sum, then $\widecheck{K}$ is a normalised Dirac delta function. For $(a,q)=1$ and $K(n):=\text{Kl}_2(an;q)$, an immediate computation shows that 
\begin{equation*}
\widehat{K}(h)=\begin{cases}
0 & \text{ if } q \mid h \\
e \big({-\frac{a \overline{h}}{q}} \big) & \text{ if } q \nmid h,
\end{cases}
\end{equation*}
and 
\begin{equation*}
\widecheck{K}(n)=\begin{cases}
\frac{q-1}{q^{1/2}} & \text{ if } n \equiv a \pmod{q} \\
-\frac{1}{q^{1/2}} & \text{ otherwise}.
\end{cases}
\end{equation*}

\section{Bilinear bounds} \label{bilinbounds}
In this section we record two bilinear bounds for Kloosterman sums that we will use in conjunction with Proposition \ref{bilinprop}. The first bound is a smoothed version of a bound due to Fouvry, Kowalski and Michel \cite{FKM}.  This smoothed version appears in \cite{BFKPM2}. For $\alpha \in \mathbb{C}^R$, let $\| \alpha \|_2$ denote the usual $\ell^2$--norm.

\begin{theorem} \cite[Theorem~1.17]{FKM} and \cite[Proposition~2.3]{BFKPM2} \label{FKM}
Let $q$ be a prime number and $1 \leq M,N \leq q$ and $(\alpha_m)$ and $(\beta_n)$ be sequences of complex numbers supported in $[M,2M]$ and $[N,2N]$ respectively. Let $Q=1$ and $W$ be the constant function 1, or $Q \geq 1$ and $W$ be a smooth function satisfying \eqref{smooth} below. Then for every $\varepsilon>0$ we have 
\begin{equation*}
\sum_{m,n} \alpha_m \beta_n \emph{Kl}_2(mn;q) W \Big( \frac{mn}{Y} \Big) \ll_{\varepsilon} \| \alpha \|_2 \| \beta \|_2 (MN)^{\frac{1}{2}} \Big( \frac{1}{M}+Q \frac{q^{\frac{1}{2}+\varepsilon}}{N} \Big)^{\frac{1}{2}}.
\end{equation*}
\end{theorem}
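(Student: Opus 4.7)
My plan is to apply Cauchy--Schwarz in the $n$ variable, complete the resulting inner sum modulo $q$ via Poisson summation, and invoke Deligne's Riemann Hypothesis (through Katz's theory of Kloosterman sheaves, as packaged by Fouvry--Kowalski--Michel) to bound the ensuing correlation sum. This is the standard FKM bilinear framework; the $Q$ factor tracks the analytic conductor of $W$.

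First, insert a smooth nonnegative majorant $\phi$ with $\mathbf{1}_{[N,2N]} \leq \phi$ and $\mathrm{supp}(\phi) \subset [N/2,4N]$, then apply Cauchy--Schwarz in $n$ to obtain
\begin{equation*}
\Big| \sum_{m,n} \alpha_m \beta_n \mathrm{Kl}_2(mn;q) W(mn/Y) \Big|^2 \leq \|\beta\|_2^2 \sum_{m_1,m_2} \alpha_{m_1}\overline{\alpha_{m_2}}\, T(m_1,m_2),
\end{equation*}
where $T(m_1,m_2) := \sum_n \phi(n/N)\, \mathrm{Kl}_2(m_1 n; q) \overline{\mathrm{Kl}_2(m_2 n;q)} W(m_1 n/Y)\overline{W(m_2 n/Y)}$. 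The diagonal $m_1=m_2$ is estimated trivially via the Weil bound $|\mathrm{Kl}_2|\leq 2$, contributing $\ll N\|\alpha\|_2^2 \|\beta\|_2^2$.

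For the off-diagonal $m_1 \neq m_2$, apply Poisson summation modulo $q$ to the $n$-sum. This reduces $T(m_1,m_2)$ to a sum over frequencies $h \pmod q$ of the correlation sum
\begin{equation*}
\mathcal{C}(m_1, m_2; h) := \sum_{a \in \mathbb{F}_q^\times} \mathrm{Kl}_2(m_1 a; q)\, \overline{\mathrm{Kl}_2(m_2 a; q)}\, e(ha/q),
\end{equation*}
weighted by the Fourier transform of $x \mapsto \phi(x)W(m_1 N x/Y)\overline{W(m_2 N x/Y)}$. The Sobolev-type assumption \eqref{smooth} on $W$ confines this transform essentially to $|h| \lesssim qQ/N$, with rapid decay beyond (by repeated integration by parts). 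The key input from \cite[\S 6]{FKM}, via Katz's monodromy computations for Kloosterman sheaves and Deligne's Riemann Hypothesis, then yields $|\mathcal{C}(m_1,m_2;h)| \ll q^{1/2}$ for all $m_1 \not\equiv m_2 \pmod q$ uniformly in $h$. Consequently $T(m_1,m_2) \ll Q q^{1/2+\varepsilon}$, and summing via $\sum_{m_1,m_2}|\alpha_{m_1}\alpha_{m_2}| \leq M\|\alpha\|_2^2$ bounds the off-diagonal contribution by $\ll MQq^{1/2+\varepsilon}\|\alpha\|_2^2 \|\beta\|_2^2$. Adding the two contributions gives $\|\alpha\|_2^2 \|\beta\|_2^2 (N + MQq^{1/2+\varepsilon})$, and taking square roots reproduces the claimed shape $\|\alpha\|_2\|\beta\|_2(MN)^{1/2}(1/M + Qq^{1/2+\varepsilon}/N)^{1/2}$.

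The main obstacle is the square-root cancellation for $\mathcal{C}(m_1,m_2;h)$ when $m_1 \not\equiv m_2 \pmod q$, which ultimately rests on showing that the tensor product of two suitably multiplicatively shifted Kloosterman sheaves is geometrically irreducible with large monodromy. This is not elementary and is absorbed here by citing the FKM framework as a black box. A secondary technical point is the Fourier-analytic bookkeeping of the completion step: one must verify that the contribution of frequencies $|h| \gg qQ/N$ is negligible, and track the smoothness-dependent constants in the transforms of $\phi$ and $W$ so that the factor of $Q$ appears cleanly in the final estimate with no spurious losses.
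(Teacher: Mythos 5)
The paper does not prove Theorem~\ref{FKM}; it is quoted verbatim from \cite[Theorem~1.17]{FKM} (smooth version \cite[Proposition~2.3]{BFKPM2}), so there is no in-paper proof against which to compare. Your sketch faithfully reproduces the completion method used in those sources: Cauchy--Schwarz in $n$, a trivial diagonal via Weil's bound, Poisson summation modulo $q$ in the off-diagonal $n$-sum, truncation of the dual frequencies at $|h|\ll q^{1+\varepsilon}Q/N$ via integration by parts tracking the $Q$-dependence, and square-root cancellation for the correlation sums $\mathcal{C}(m_1,m_2;h)$ from Deligne's Riemann Hypothesis and Katz's monodromy computations for Kloosterman sheaves; the final arithmetic $(N+MQq^{1/2+\varepsilon})^{1/2}=(MN)^{1/2}(M^{-1}+Qq^{1/2+\varepsilon}/N)^{1/2}$ is correct. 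Two small points worth tightening: the large-cancellation input requires $m_1\not\equiv m_2\pmod q$ (and $(m_1m_2,q)=1$), not merely $m_1\neq m_2$; with $m_1,m_2\in[M,2M]$ and $M\leq q$ these coincide except on a thin boundary near $M\asymp q$, which must be folded into the ``diagonal'' rather than the off-diagonal. Also, the implicit normalization $Y\asymp MN$ (so that $m_it/Y\asymp1$ on the support) should be stated, since otherwise the sum is empty and there is nothing to prove.
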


The second bound is a recent result due to Kowalski, Michel and Sawin \cite{KMS}. Using deep methods from algebraic geometry they prove general bounds for hyper--Kloosterman sums in the \emph{Polya--Vinogradov} range. This is when the parameters $M$ and $N$ are both close to $q^{\frac{1}{2}}$. We state their result in a special case.

\begin{theorem} \cite[Theorem~1.1]{KMS} \label{KMS}
Let $q$ be a prime number. Let $a$ be an integer coprime to $q$.  Let $M,N \geq 1$ be real numbers such that
\begin{equation*}
1 \leq M \leq q^{1/4} N \quad \text{and} \quad q^{1/4}<MN<q^{5/4}.
\end{equation*}
Let $\mathcal{N} \subset [1,q-1]$ be an interval of length $\lfloor N \rfloor$, and let $\boldsymbol{\alpha}=(\alpha_m)_{m \leq M}$ and $\boldsymbol{\beta}=(\beta_n)_{n \in \mathcal{N}}$ be sequences of complex numbers. For any $\varepsilon>0$, we have 
\begin{equation} \label{KMSbd}
\sum_{m,n} \alpha_m \beta_n \emph{Kl}_2(amn;q) \ll_{\varepsilon} q^{\varepsilon} \| \alpha \|_2 \|\beta\|_2 (MN)^{\frac{1}{2}} \big(M^{-\frac{1}{2}}+(MN)^{-\frac{3}{16}} q^{\frac{11}{64}} \big).
\end{equation}
\end{theorem}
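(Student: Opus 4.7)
The bound splits naturally into two regimes: the first term $M^{-1/2}$ inside the parenthesis reflects a standard Cauchy--Schwarz plus Weil argument, while the second term $(MN)^{-3/16} q^{11/64}$, effective in the P\'olya--Vinogradov range $MN \sim q^{5/4}$, requires substantial input from $\ell$-adic cohomology and additive combinatorics in $\mathbb{F}_q$.

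To dispose of the easy term, I would apply Cauchy--Schwarz in the variable $n$ to the form $B = \sum_m \sum_n \alpha_m \beta_n \text{Kl}_2(amn;q)$, obtaining
\begin{equation*}
|B|^2 \leq \|\beta\|_2^2 \sum_{n \in \mathcal{N}} \Big| \sum_m \alpha_m \text{Kl}_2(amn;q) \Big|^2.
\end{equation*}
Expanding the square and isolating the diagonal $m_1=m_2$, the Weil bound $|\text{Kl}_2(x;q)|\le 2$ bounds that contribution by $O(N\|\alpha\|_2^2 \|\beta\|_2^2)$, which after a square root gives exactly the $(MN)^{1/2} M^{-1/2} \|\alpha\|_2 \|\beta\|_2$ term.

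The heart of the proof is the off-diagonal contribution, which reduces to the bilinear average of correlation sums
\begin{equation*}
T(m_1,m_2) = \sum_{n \in \mathcal{N}} \text{Kl}_2(am_1 n;q) \text{Kl}_2(am_2 n;q), \qquad m_1 \neq m_2,
\end{equation*}
where we have used that $\text{Kl}_2$ is real-valued. I would apply P\'olya--Vinogradov completion to rewrite $T(m_1,m_2)$ as a weighted combination of complete sums $S_h(m_1,m_2) = \sum_{n \bmod q} \text{Kl}_2(am_1 n;q)\text{Kl}_2(am_2 n;q) e(hn/q)$. For generic $m_1, m_2, h$, the integrand is the trace function of an irreducible $\ell$-adic sheaf obtained as a tensor product and translate of Kloosterman sheaves; Katz's monodromy calculations identify its geometric monodromy group and Deligne's Riemann Hypothesis yields square-root cancellation $|S_h(m_1,m_2)| \ll \sqrt{q}$. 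Feeding this back produces a bound of shape $M^{-1/2} + N^{-1/2} q^{1/4+\varepsilon}$, which is weaker than claimed and must be improved.

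The main obstacle, and the reason the exponents $3/16$ and $11/64$ appear, is extracting cancellation beyond what a single Cauchy--Schwarz allows. Following what I expect is the Kowalski--Michel--Sawin strategy, I would invoke a $q$-analogue of van der Corput: shift the variable $n$ by many parameters $\ell$ in a set $L$, apply Cauchy--Schwarz once more, and analyze the resulting quadruple correlation sums of shifted Kloosterman values of the form $\sum_n \text{Kl}_2(n)\text{Kl}_2(\lambda_1 n)\text{Kl}_2(\lambda_2 n)\text{Kl}_2(\lambda_3 n) e(hn/q)$. Again by Katz's sheaf-theoretic monodromy results these exhibit square-root cancellation off an exceptional algebraic subvariety in the $\lambda$-parameters, and the crucial final step is to bound the number of $(m_1, m_2, \ell_1, \ell_2)$ landing on this exceptional locus by means of quantitative sum-product estimates in $\mathbb{F}_q$. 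Balancing the generic contribution, the exceptional contribution, and the P\'olya--Vinogradov weights against one another produces the stated exponents. The hard part is precisely the interplay between the algebraic-geometric side (identifying and controlling monodromy of tensor products and shifts of Kloosterman sheaves in several parameters) and the additive-combinatorial side (sum-product control of the exceptional locus), which is the technical core of \cite{KMS} and not something one would derive from scratch in a few lines.
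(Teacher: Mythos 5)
The paper does not prove this statement: Theorem~\ref{KMS} is quoted verbatim from Kowalski--Michel--Sawin \cite[Theorem~1.1]{KMS} and used as a black box (the only work done locally is the Abel-summation step \eqref{smthbd} converting it into a smoothed bound). So there is no internal proof to compare against, and the right question is only whether your sketch accurately describes the KMS strategy.

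At a high level it does: the $M^{-1/2}$ term indeed comes from a Cauchy--Schwarz in $n$ followed by Weil on the diagonal, and the second term comes from completing the off-diagonal correlation sums, invoking Katz's $\ell$-adic monodromy computations and Deligne's Riemann Hypothesis for square-root cancellation in generic complete sums, and then controlling an exceptional locus combinatorially. Two points are worth sharpening, though. First, the essential novelty in KMS is not a generic $q$-van der Corput shift in a single parameter set $L$, but the specific two-parameter \emph{$+ab$ shift}: one replaces $n$ by $n+ab$ with $a\in A$, $b\in B$ and performs Cauchy--Schwarz so as to make both parameters play a structured role; this is precisely what gives access to the exponents $3/16$ and $11/64$ after optimizing $|A|,|B|$ against the other quantities, and a one-parameter shift would not reproduce them. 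Second, the ``sum-product'' language is suggestive but slightly off the mark; the additive-combinatorial input in KMS is better described as counting solutions to the algebraic relations that cut out the degenerate (non-generic-monodromy) locus of the sum-of-products sheaves, together with energy-type bounds coming from the $ab$ structure, rather than an off-the-shelf sum-product theorem over $\mathbb{F}_q$. Your one-Cauchy--Schwarz intermediate bound $M^{-1/2}+N^{-1/2}q^{1/4+\varepsilon}$ is the correct ``P\'olya--Vinogradov'' baseline that the $ab$ shift is designed to beat. As you say, a complete proof is far beyond what can be reproduced here, and the paper under review (quite reasonably) does not attempt it.
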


We will need a smoothed version of Theorem \ref{KMS}. Let $Q \geq 1$, $\varepsilon>0$ and let $W$ be a smooth function satisfying \eqref{smooth} below. Suppose $X \geq 1$ and $(\alpha_m)$ and $(\beta_n)$ are supported on intervals $[M,2M]$ and $[N,2N]$ respectively. We will need an estimate for
\begin{equation} \label{smoothbilin}
\sum_{\substack{m,n}} \alpha_m \beta_n \text{Kl}_2(amn;q)  W \Big(\frac{mn}{X} \Big). 
\end{equation}
Without loss of generality we may assume $MN/8<X<8MN$, otherwise the sum in \eqref{smoothbilin} is empty. Applying Abel summation to \eqref{smoothbilin} yields 
\begin{equation} \label{smthbd}
\sum_{\substack{m,n}} \alpha_m \beta_n \text{Kl}_2(amn;q)  W \Big(\frac{mn}{X} \Big) \ll_{\varepsilon} (q^{\varepsilon} Q)^2 \| \alpha \|_2 \|\beta\|_2 (MN)^{\frac{1}{2}} \Big(M^{-\frac{1}{2}}+(MN)^{-\frac{3}{16}} q^{\frac{11}{64}} \Big).
\end{equation}

\section{Inert functions and oscillatory integrals} \label{inertos}
We make use of the framework of inert functions and oscillatory integrals set out by Kiral, Petrow and Young in \cite{KPY}. Their work is a multivariable generalisation of Blomer, Khan and Young \cite[Proposition~8.2 and Corollary~8.3]{BKY}. We recall their definition of inert functions. These are families of functions that satisfy certain derivative bounds. Let $\mathcal{F}$ be an index set and $X:=X_T: \mathcal{F} \rightarrow \R_{\geq 1}$ be a function of $T \in \mathcal{F}$.
\begin{defin}
A family $\{w_T\}_{T \in \mathcal{F}}$ of smooth functions supported on a product of dyadic intervals in $\R_{>0}$ is called $X$-inert if for each $j=(j_1,j_2,\ldots,j_d) \in \Z_{\geq 0}^d$ we have 
\begin{equation*}
C(j_1,\ldots,j_d):=\sup_{T \in \mathcal{F}} \sup_{(x_1,\ldots,x_d) \in \R_{>0}^d} X_T^{-j_1-\cdots-j_d} \Big | x_1^{j_1} \cdots x_d^{j_d} w_T^{(j_1,\ldots,j_d)}(x_1,\ldots,x_d)  \Big |<\infty.
\end{equation*}
\end{defin}
The notion of $X$-inertness measures the uniformity of \emph{flatness} of the functions $w_T$ as we move across the family $\mathcal{F}$.  The sequence of constants $C(j_1,\ldots,j_d)$ is abbreviated by $C_{\mathcal{F}}$. We present one key example whose idea is heavily used throughout this paper.
\begin{example}[Dilation]
Let $w$ be a fixed smooth function on $[1,2]^d$ and define 
\begin{equation*} 
w_{X_1,\ldots,X_d}(x_1,\ldots,x_d)=w \Big(\frac{x_1}{X_1}, \cdots, \frac{x_d}{X_d}  \Big).
\end{equation*} 
Then with $\mathcal{F}=\{T=(X_1,\ldots,X_d) \in \R_{>0}^d \}$, the family $\{ w_T\}_{T \in \mathcal{F}}$ is $1$-inert.
\end{example}

The stationary phase method appears in many references and gives an asymptotic expansion for a Fourier integral of the form $\int_{-\infty}^{\infty} a(x) e^{i \lambda \phi(x)} dx$, where $a$ is smooth with compact support and $\phi^{\prime}(x_0)=0$ for a unique $x_0$ in the support of $a$. For example, see \cite[Theorem~7.7.5]{Hor}, \cite[Ch. VIII Proposition 3]{St} and \cite[Theorem~3.11]{Zwor}. As pointed out in \cite{KPY}, such integrals are often insufficient for applications in analytic number theory. 

In our case, we wish to analyse the Fourier transform of a certain family of oscillatory weight functions. This analysis needs to be uniform with respect to some auxiliary parameters. This type of situation naturally occurs after an application of a summation formula, in our case, after an application of Proposition \ref{temper}.  Integrals arising from this process do not usually have a phase of the form $\lambda \phi(x)$. In special cases one may try reduce the phase to a function of the form $\lambda \phi(x)$ using an ad-hoc change of variables. However, this is often not feasible in practice. 

The following recent theorem due to Kiral, Petrow and Young gives an asymptotic for stationary phase integrals that is uniform with respect to many parameters.

\begin{theorem} \cite[Main Theorem]{KPY} \label{KPY}
Let $\mathcal{F}$ be an index set and suppose $\{w_T\}_{T \in \mathcal{F}}$ is an $X$-inert family of functions in the variables $t_1,\ldots,t_d$, supported on $t_1 \asymp Z$ and $t_i \asymp X_i$ for $i=2,\ldots,d$. Suppose that on the support of $w_T$, $\phi=\phi_T$ satisfies 
\begin{equation} \label{phibd}
\frac{\partial^{a_1+\cdots+a_d}}{\partial t_1^{a_1} \cdots \partial t_d^{a_d}} \phi(t_1,\ldots,t_d) \ll_{\mathcal{C}_{\mathcal{F}}} \frac{Y}{Z^{a_1}} \frac{1}{X_2^{a_2} \cdots X_d^{a_d} }
\end{equation}
for all $a_1,\ldots, a_d \in \mathbb{N}$. Suppose that 
\begin{equation} \label{secondstat}
\phi^{\prime \prime}(t_1,\ldots,t_d) \gg Y/Z^2 \quad  \text{for all } t_1,t_2,\ldots,t_d  \text{ in the support of } w_T,
\end{equation}
where the derivatives are taken with respect to $t_1$, and that there exists $t_0 \in \mathbb{R}$ such that $\phi^{\prime}(t_0)=0$ (note that $t_0$ is necessarily unique). Suppose that $Y/X^2 \geq R \geq 1$. Then 
\begin{equation} \label{integral}
\int_{\R} e \big( \phi(t_1,\ldots,t_d) \big) w_T(t_1,\ldots,t_d) dt_1=\frac{Z}{\sqrt{Y}} \mathcal{W}_T(t_2,\ldots,t_d) e \big( \phi(t_0,t_2,\ldots,t_d) \big) +O_A(Z R^{-A}),
\end{equation}
for some family of $X_T$-inert functions $\mathcal{W}_T$, and where $A>0$ is arbitrarily large. The implied constant in \eqref{integral} depends only on $A$ and $C_{\mathcal{F}}$.
\end{theorem}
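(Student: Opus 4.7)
The plan is to proceed via a smooth partition of unity separating the unique stationary point $t_0=t_0(t_2,\ldots,t_d)$ from the region where $|\phi'(t_1)|$ is bounded away from zero, then apply repeated integration by parts on the non-stationary piece and a Taylor/change-of-variables argument on the stationary piece. To set up, choose a fixed $1$-inert bump $\rho$ with $\rho\equiv 1$ on $[-1,1]$ and supported on $[-2,2]$, and write
\begin{equation*}
w_T(t_1,\ldots,t_d)=w_T\,\rho\!\left(\frac{(t_1-t_0)\sqrt{Y}}{Z\sqrt{R}}\right)+w_T\left(1-\rho\!\left(\frac{(t_1-t_0)\sqrt{Y}}{Z\sqrt{R}}\right)\right)=:w_T^{(s)}+w_T^{(n)}.
\end{equation*}
By the implicit function theorem applied to $\phi'(t_0;t_2,\ldots,t_d)=0$ together with \eqref{phibd}--\eqref{secondstat}, $t_0$ is a smooth function of $(t_2,\ldots,t_d)$ whose partials satisfy $X$-inert bounds, so the factor $\rho(\cdot)$ inherits good derivative behaviour.

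On the support of $w_T^{(n)}$, the mean value theorem and \eqref{secondstat} give $|\phi'(t_1,\ldots,t_d)|\gg (Y/Z^2)|t_1-t_0|\gg \sqrt{Y}/(Z\sqrt{R})$ in the relevant range, while higher derivatives of $\phi$ in $t_1$ are controlled by \eqref{phibd}. I would then define the operator $Lf=(2\pi i\,\phi')^{-1}f'$ and iterate $N$ times; each application replaces $w_T^{(n)}$ by a function of the same dyadic support whose size is bounded by $(R^{-1/2})^N$ times the original bound (with constants depending only on $C_{\mathcal{F}}$). Choosing $N$ large enough in terms of $A$ produces the acceptable error $O_A(Z R^{-A})$.

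On the support of $w_T^{(s)}$, Taylor expand
\begin{equation*}
\phi(t_1,\ldots,t_d)=\phi(t_0,t_2,\ldots,t_d)+\tfrac{1}{2}\phi''(t_0,\ldots)(t_1-t_0)^2+\eta(t_1,\ldots,t_d),
\end{equation*}
with $\eta=O(Y|t_1-t_0|^3/Z^3)$, then change variables $u=(t_1-t_0)\sqrt{Y}/Z$. The Gaussian factor $e\bigl(\tfrac{1}{2}\phi''Z^2/Y\,u^2\bigr)$ has coefficient of order one by \eqref{secondstat}, and $e(\eta)$ becomes $e\!\bigl(O(u^3/\sqrt{R})\bigr)$, which may be expanded as an absolutely convergent power series in $1/\sqrt{R}$ on the cutoff region $|u|\lesssim\sqrt{R}$, each coefficient being a polynomial in $u$ multiplied by an $X$-inert function of $(t_2,\ldots,t_d)$. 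Extending each $u$-integral back to $\R$ at an acceptable error $O_A(ZR^{-A})$ (again by integration by parts against the non-degenerate Gaussian), standard Gaussian integration yields a leading factor $Z/\sqrt{Y}$ times $e(\phi(t_0,t_2,\ldots,t_d))$ times a smooth function $\mathcal{W}_T(t_2,\ldots,t_d)$.

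The main obstacle I anticipate is not establishing the asymptotic itself but verifying that the resulting $\mathcal{W}_T$ is genuinely $X_T$-inert with constants depending only on $C_{\mathcal{F}}$. Each derivative $\partial_{t_i}$ ($i\geq 2$) can fall either on $t_0$, on $\phi''(t_0,\ldots)$, or on the $X$-inert coefficients produced in the Taylor expansion and on the cutoff $\rho$; tracking these requires showing inductively, using \eqref{phibd}, \eqref{secondstat} and the implicit function theorem, that $\partial_{t_i}^{a_i}t_0\ll X_i^{-a_i}$ and $\partial_{t_i}^{a_i}\phi''(t_0,\ldots)\ll (Y/Z^2)X_i^{-a_i}$ uniformly across the family. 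Once this book-keeping is in place, the derivative bounds for $\mathcal{W}_T$ follow by the Leibniz rule and the conclusion \eqref{integral} is obtained with constants depending only on $A$ and $C_{\mathcal{F}}$.
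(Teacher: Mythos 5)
This theorem is cited in the paper as \cite[Main Theorem]{KPY}; the authors take it as a black box and supply no proof, so there is no internal argument to compare against. On its own merits, your sketch follows the standard stationary-phase template --- a smooth dyadic split around the critical point, repeated integration by parts on the non-stationary piece, and quadratic Taylor expansion plus Gaussian integration near $t_0$ --- and this is indeed the skeleton of the argument in \cite{KPY} (which in turn builds on \cite[Proposition~8.2]{BKY}), so at the level of strategy you are on the right track.

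There is, however, a genuine gap in the stationary piece. On the support of $w_T^{(s)}$ one has $|u|\lesssim\sqrt R$, hence $|\eta|\ll u^3/\sqrt Y\leq u^3/\sqrt R$ can be as large as $R$, which is \emph{not} small; the series $\sum_{j\geq 0}(2\pi i\eta)^j/j!$ therefore does not constitute an asymptotic expansion in $R^{-1/2}$. For $|u|\asymp\sqrt R$ the $j$-th term has size of order $R^j/j!$ before any cancellation, and interchanging the (convergent but not uniformly small) series with the $u$-integral is exactly where the content of the theorem lies: the decay must come from the oscillation of the Gaussian factor (the moments $\int_\R u^m e(\alpha u^2)\,du$ with $\alpha\asymp1$), together with uniform control of the extension error for each monomial $u^{3j+k}$, and then a summation over $j$, none of which is automatic. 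The cleaner route --- and what makes the inert framework function --- is to absorb $e(\eta)$ together with the cutoff into a single new weight, show it is inert with a controlled (possibly larger) parameter, and then use an exact Gaussian Fourier transform or Parseval identity to extract the leading constant, verifying that only the behaviour at $u=0$ survives. Incidentally, your lower bound on $|\phi'|$ in the non-stationary region should read $\gg\sqrt{YR}/Z$, not $\gg\sqrt Y/(Z\sqrt R)$; with the correct bound each integration by parts gains a factor $R^{-1}$, which is what makes the $O_A(ZR^{-A})$ error achievable, whereas the bound as written gives no gain at all at the cutoff boundary. Finally, you are right to flag the $X_T$-inertness of $\mathcal W_T$ as the other hard point; the uniform control of $\partial_{t_i}^{a_i}t_0$ and $\partial_{t_i}^{a_i}\phi''(t_0,\cdot)$ via the implicit function theorem, with constants depending only on $\mathcal C_{\mathcal F}$, is not routine bookkeeping but the principal reason the theorem is nontrivial.
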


\begin{remark}
It is clear from the definition of $\mathcal{W}_T$ on \cite[pg.~7]{KPY} that $\mathcal{W}_T$ has the same support as $w_T$ in the variables $t_i$ for $i=2,\ldots,d$.
\end{remark}

The main advantage of using Theorem \ref{KPY} to compute $\widehat{G}$ on the right side of \eqref{voronoi} is that we are subsequently able to exploit further cancellation in the summation on $m$ and $n$. In particular, we do this for ranges of $m$ and $n$ where $\widehat{G}(\frac{m}{q},\frac{n}{q})$ is a stationary phase integral. In other ranges it suffices to integrate by parts.

\section{Oscillatory weight functions} \label{weightfn}
For $\varepsilon>0$ given, we will index our family of smooth oscillatory weight functions by elements of the set
\begin{equation} \label{T}
\mathcal{T}_1(\varepsilon):=\Big \{(c,q,M,N,Q) \in \R_{>0} \times \R_{\geq 1} \times \R_{\geq \frac{1}{2}}^2 \times \R_{\geq 1}: \frac{cMN}{(q^{\varepsilon} Q)^2} \geq 1 \Big \}.
\end{equation}
For $T \in \mathcal{T}_1(\varepsilon)$, let $W_i: [0,\infty] \rightarrow \R$ be a smooth function such that
\begin{equation} \label{smooth}
\text{supp} \hspace{0.05cm} W_i \subset [0.95,1.05] \quad \text{and} \quad W_i^{(j)}(x) \ll_{j,\varepsilon} (q^{\varepsilon} Q)^j \text{ for any } x \geq 0, j \geq 0.
\end{equation}

\begin{remark}
Let $\varepsilon>0$ and suppose $W_i^{(j)}(x) \ll_{j,\varepsilon} (q^{\varepsilon} Q)^j$ holds above with system of constants $\mathcal{C}(\varepsilon):=\{C(j,\varepsilon): j \geq 0 \}$. All implied constants in this paper will depend on $\varepsilon$ and $\mathcal{C}(\varepsilon)$. We will omit this from subscripts in the proofs. On occasion when an implied constant depends on another auxiliary parameter, we will indicate this.
\end{remark}

For $T \in \mathcal{T}_1(\varepsilon)$ define 
\begin{equation} \label{G}
G_T(x,y):=W_1 \Big(\frac{x}{M} \Big) W_2 \Big(\frac{y}{N} \Big) e (cxy).
\end{equation}

For $T \in \mathcal{T}_1(\varepsilon)$ and $U \geq 1$, define
\begin{equation} \label{H}
H_{T,U}(x,y):=W_1 \Big(\frac{x}{M} \Big) W_2 \Big(\frac{y}{N} \Big) W_3 \Big(\frac{xy}{U} \Big) e (cxy).
\end{equation}

\section{Non-stationary transforms} \label{nonstatforms}
In the following lemma we estimate the Fourier transforms $\widehat{G}_T \big(m/q,n/q \big)$ and $\widehat{H}_{T,U} \big(m/q,n/q \big)$ for those values of $m$ and $n$ that correspond to a non-stationary phase integral.
\begin{lemma} \label{oscill2}
Let $\varepsilon>0$ and $T \in \mathcal{T}_1(\varepsilon)$ be as in \eqref{T}. For $T \in \mathcal{T}_1(\varepsilon)$ and $U \geq 1$, let $W_i$, $G_T$ and $H_{T,U}$ be as in \eqref{smooth}, \eqref{G} and \eqref{H} respectively. Then 
\begin{equation} \label{nonstat}
\widehat{G}_T \big(\frac{m}{q},\frac{n}{q} \big)  \ll \begin{cases}
 \frac{(q^{1+\varepsilon} Q)^{2+\varepsilon}}{|mn|^{1+\varepsilon}   } &   \emph{ if } |n| \geq 1.1cqM \emph{ and } |m| \geq 1.1cqN  \\[0.5em] 
 M N \big(\frac{q^{1+\varepsilon} Q}{|n|N} \big)^{2+\varepsilon} &   \emph{ if } |n| \geq 1.1cqM \emph{ and }  |m| \leq 1.1cqN  \\[0.5em] 
 M N \big(\frac{q^{1+\varepsilon} Q}{|m|M} \big)^{2+\varepsilon}  &  \emph{ if } |n| \leq 1.1cqM \emph{ and }   |m| \geq 1.1cqN \\[0.5em]
\frac{(q^{\varepsilon} Q)^2}{c^2 MN}  &   \emph{ if }   [-1.1cqM \leq n \leq 0.9cqM \emph{ and } |m| \leq 1.1cqN]  \\[0.5em]
 &  \emph{ or }  [|n| \leq 1.1cqM \emph{ and }  -1.1cqN \leq m \leq 0.9cqN]. \\
\end{cases}
\end{equation}
The bound \eqref{nonstat} is also satisfied by $\widehat{H}_{T,U}$. The implied constant depends only on $\varepsilon>0$ and $\mathcal{C}(\varepsilon)$.
\end{lemma}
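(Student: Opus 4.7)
The plan is to estimate each Fourier transform by iterated non-stationary phase integration by parts, exploiting that the phase
\[
\phi(x,y) = cxy - \tfrac{mx}{q} - \tfrac{ny}{q}
\]
has the convenient property that $\partial_x\phi = cy - m/q$ is independent of $x$ and $\partial_y\phi = cx - n/q$ is independent of $y$. If $|\partial_y\phi| \geq B > 0$ on the support of the integrand, then the one-dimensional IBP identity in $y$ applied $A$ times yields
\[
\int_{\R} W_2\!\left(\tfrac{y}{N}\right) e(\phi(x,y))\,dy = \frac{(-1)^A}{(2\pi i\,\partial_y\phi)^A}\int_{\R} \frac{W_2^{(A)}(y/N)}{N^A}\,e(\phi(x,y))\,dy,
\]
of magnitude $\ll N(q^\varepsilon Q/(NB))^A$ by \eqref{smooth}. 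Multiplying by $\int|W_1(x/M)|\,dx \ll M$ gives $|\widehat{G}_T(m/q,n/q)| \ll MN(q^\varepsilon Q/(NB))^A$, and symmetrically for IBP in $x$.

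Next, I would verify the relevant lower bound for $\partial_x\phi$ or $\partial_y\phi$ in each of the four regimes of \eqref{nonstat}, on the support $cy \in [0.95cN, 1.05cN]$, $cx \in [0.95cM, 1.05cM]$. In Case 1 the hypotheses $|m|/q \geq 1.1cN$ and $|n|/q \geq 1.1cM$ force $|\partial_x\phi| \gg |m|/q$ and $|\partial_y\phi| \gg |n|/q$, so both one-sided IBP bounds $MN(q^{1+\varepsilon}Q/(|m|M))^k$ and $MN(q^{1+\varepsilon}Q/(|n|N))^\ell$ hold. Performing the two IBPs in sequence gives a joint estimate $MN(q^{1+\varepsilon}Q/(|m|M))^k(q^{1+\varepsilon}Q/(|n|N))^\ell$ as the principal contribution; the cross-terms arising from $\partial_y(\partial_x\phi)^{-1} = -c(\partial_x\phi)^{-2}$ are smaller than the principal term by a factor of $q^\varepsilon Q$ per cross-differentiation and may be absorbed. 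Choosing $k=\ell$ sufficiently large in terms of $\varepsilon$ and invoking $cMN \geq (q^\varepsilon Q)^2$ to absorb remaining $q^\varepsilon$-factors delivers the claimed bound $(q^{1+\varepsilon}Q)^{2+\varepsilon}/|mn|^{1+\varepsilon}$. Cases 2 and 3 are symmetric and use one-sided IBP only. For the first sub-case of Case 4 ($-1.1cqM \leq n \leq 0.9cqM$ and $|m|\leq 1.1cqN$), the inequality $n/q \leq 0.9cM < 0.95cM \leq cx$ forces $|\partial_y\phi| \geq 0.05cM$, and two IBPs in $y$ produce $MN(q^\varepsilon Q/(cMN))^2 = (q^\varepsilon Q)^2/(c^2MN)$, as required.

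The bound for $\widehat{H}_{T,U}$ follows from the same argument once we observe that the additional factor $W_3(xy/U)$ contributes $\partial_y W_3(xy/U) = (x/U)W_3'(xy/U)$, and on the support $xy\asymp U$ together with $x\asymp M$ forces $x/U \asymp 1/N$, matching the scale of $\partial_y W_2(y/N)$; symmetrically in $x$. Hence the IBP estimates extend verbatim. The main technical obstacle is the careful bookkeeping of cross-terms in the joint IBP for Case 1: since $\partial_{xy}\phi = c \neq 0$, each alternation of IBP generates several terms and one must verify that the principal term dominates. The hypothesis $cMN \geq (q^\varepsilon Q)^2$ is essential here, as it guarantees that each IBP produces a genuine saving $q^{1+\varepsilon}Q/\max(|m|M, |n|N) \leq q^\varepsilon Q/(cMN) \leq q^{-\varepsilon}$, so that iteration strictly decreases the bound.
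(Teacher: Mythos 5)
Your proposal is correct and uses the same core device as the paper: iterated integration by parts in each variable, made especially clean here because $\partial_x\phi$ is independent of $x$ and $\partial_y\phi$ independent of $y$, with lower bounds on $|\partial_x\phi|$ or $|\partial_y\phi|$ coming from the hypotheses in each regime, and with $H_{T,U}$ handled by observing that $W_3(xy/U)$ does not change the derivative scales since $U\asymp MN$ on the support. The one place you genuinely diverge from the paper is Case~1. You propose a \emph{joint} iterated IBP in both variables and then argue that the resulting Leibniz cross-terms (coming from $\partial_x(\partial_y\phi)^{-A}$, etc.) are dominated by the principal term by a factor of roughly $q^{\varepsilon}Q$ per cross-differentiation; this is correct but requires bookkeeping, and the justification is a bit compressed (the relevant comparison is $cM/|\partial_y\phi|\ll cMq/|n|\ll 1/(q^{\varepsilon}Q)$ against $q^{\varepsilon}Q/M$, using $|n|\geq 1.1cqM$). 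The paper sidesteps all of this: it derives the two one-variable bounds
\[
\widehat{G}_T\Big(\tfrac{m}{q},\tfrac{n}{q}\Big)\ll MN\Big(\tfrac{q^{1+\varepsilon}Q}{N|n|}\Big)^{A}
\quad\text{and}\quad
\widehat{G}_T\Big(\tfrac{m}{q},\tfrac{n}{q}\Big)\ll MN\Big(\tfrac{q^{1+\varepsilon}Q}{M|m|}\Big)^{A},
\]
valid separately whenever the respective hypothesis holds, and then simply takes the geometric mean via $\min(C,D)\le C^{1/2}D^{1/2}$ in the region where both apply. The same elementary inequality is also what the paper uses to pass from integer $A$ (forced by IBP) to $A=2+\varepsilon$; your write-up is silent on the non-integer exponent, though this is not a gap since taking integer $k=\ell\ge 2$ already yields a bound at least as strong as the stated one. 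In short: your route works and would in fact give a slightly sharper Case~1 bound, but the paper's interpolation argument reaches the stated bound without any cross-term analysis, which is why it is the cleaner choice.
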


\begin{proof}
This argument amounts to repeated integration by parts. We first write $\widehat{G}_T$ as follows
\begin{equation} \label{order}
\widehat{G}_T \Big(\frac{m}{q},\frac{n}{q} \Big)=  \int_{\R} W_1 \Big( \frac{u}{M} \Big) e \Big({-\frac{mu}{q}} \Big) \int_{\R} W_2 \Big( \frac{v}{N} \Big)  e \Big({cuv-\frac{nv}{q}}  \Big) dv du.
\end{equation}
If $|n| \geq  1.1cqM$, then applying integration by parts $A$ times with respect to $v$ yields 
\begin{equation} \label{range1}
\widehat{G}_T \Big(\frac{m}{q},\frac{n}{q} \Big) \ll_{A} M N \Big(\frac{q^{1+\varepsilon} Q}{N |n|} \Big)^{A} \quad \text{for all } m.
\end{equation}
If $-1.1cqM \leq n \leq 0.9cqM$, then by a similar argument we obtain 
\begin{equation} \label{range2}
\widehat{G}_T \Big(\frac{m}{q},\frac{n}{q} \Big) \ll_A MN \Big( \frac{q^{\varepsilon} Q}{cMN} \Big)^A  \quad \text{for all } m.
\end{equation}

For $|m| \geq  1.1cqN$, one can interchange the order of integration in \eqref{order} to obtain 
\begin{equation} \label{range3}
\widehat{G}_T \Big(\frac{m}{q},\frac{n}{q} \Big) \ll_{A} M N \Big(\frac{q^{1+\varepsilon} Q}{M |m|} \Big)^{A} \quad \text{for all } n.
\end{equation}
Similarly for $-1.1cqN \leq m \leq 0.9 cqN$ we obtain
\begin{equation} \label{range4}
\widehat{G}_T \Big(\frac{m}{q},\frac{n}{q} \Big) \ll_{A} M N \Big(\frac{q^{\varepsilon} Q}{cMN} \Big)^{A} \quad \text{for all } n.
\end{equation}
Note that \eqref{range1}--\eqref{range4} hold for all $A \geq 1$ since 
\begin{equation} \label{mono}
\min (C,D) \leq C^{\alpha} D^{1-\alpha}, \quad 0<\alpha<1 \quad \text{and} \quad C,D>0.
\end{equation}

The first line of \eqref{nonstat} follows from applying \eqref{mono} ($\alpha=1/2$) to \eqref{range1} and \eqref{range3} with choice $A=2+\varepsilon$. The last line of \eqref{nonstat} follows directly from \eqref{range2} and \eqref{range4} with the choice $A=2$. The second and third lines of \eqref{nonstat} follow from \eqref{range1} and \eqref{range3} respectively with choice $A=2+\varepsilon$.

Without loss of generality we may assume $MN/8<U<8MN$, otherwise $H_{T,U} \equiv 0$. Consider the family of weight functions 
\begin{equation*}
W_T(x,y):=W_1 \Big( \frac{x}{M}\Big) W_3 \Big(\frac{xy}{U} \Big) \quad \Big( \text{resp. }  W_2 \Big( \frac{y}{N}\Big) W_3 \Big(\frac{xy}{U} \Big)  \Big).
\end{equation*}
These would now appear in the innermost integral occurring in \eqref{order}. We have
\begin{equation} \label{triple3}
\frac{\partial^{i+j} W_T(x,y)}{\partial x^i \partial y^j} \ll_{i,j} (q^{\varepsilon} Q)^{i+j} x^{-i} y^{-j}.
\end{equation}
Let 
\begin{equation*}
X_{T,U}:=q^{\varepsilon}Q.
\end{equation*}
Thus $W_{T,U}:\mathbb{R}^2 \rightarrow \mathbb{R}$ is a family of $X_{T,U}$-inert functions with system of constants determined by $\mathcal{C}(\varepsilon)$. Thus the argument above can be followed to establish \eqref{nonstat} for $H_{T,U}$.
\end{proof}

\section{Stationary transforms} \label{statforms}

We now give an asymptotic main term for $\widehat{G}_T(m/q,n/q)$ and  $\widehat{H}_{T,U}(m/q,n/q)$ for those values of $m$ and $n$ that correspond to a stationary phase integral. We will eventually exploit further cancellation from sums of these main terms in Section \ref{bilinos}.

\begin{lemma} \label{oscill}
Let $\varepsilon>0$ and  $\mathcal{T}_1(\varepsilon)$ be as in \eqref{T}. For $T:=(c,q,M,N,Q) \in \mathcal{T}_1(\varepsilon)$ and $U \geq 1$, let $W_i$, $G_T$ and $H_{T,U}$ be as in \eqref{smooth}, \eqref{G} and \eqref{H} respectively. Let 
\begin{equation*}
X_{T},X_{T,U}:=q^{\varepsilon}Q.
\end{equation*}
Then there exists $X_{T}$-inert (resp. $X_{T,U}$-inert) functions $\mathcal{W}_{T}(m,n)$ and $\mathcal{W}_{T,U}(m,n)$ such that for 
\begin{equation} \label{statrange}
(m,n) \in [0.9 cqN,1.1cqN] \times [0.9 cqM,1.1cqM],
\end{equation}
we have 
\begin{equation} \label{equal}
\widehat{G}_T \Big(\frac{m}{q},\frac{n}{q} \Big)=\frac{1}{c} \mathcal{W}_T(m,n) e \Big({-\frac{mn}{cq^2}} \Big)+O \Big( \frac{(q^{\varepsilon} Q)^6}{c^3 (MN)^2} \Big),
\end{equation}
and 
\begin{equation} \label{equal2}
\widehat{H}_{T,U} \Big(\frac{m}{q},\frac{n}{q} \Big)=\frac{1}{c} \mathcal{W}_{T,U}(m,n) e \Big({-\frac{mn}{cq^2}} \Big)+O \Big( \frac{(q^{\varepsilon} Q)^6}{c^3 (MN)^2} \Big).
\end{equation}
Both $\mathcal{W}_T$ and $\mathcal{W}_{T,U}$ have support contained in
\begin{equation} \label{mnsupport}
(m,n) \in [0.85 cqN,1.15cqN] \times [0.85 cqM,1.15cqM].
\end{equation}
All implied constants depend only on $\varepsilon>0$ and $\mathcal{C}(\varepsilon)$.
\end{lemma}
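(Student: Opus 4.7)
The plan is to reduce $\widehat{G}_T$ and $\widehat{H}_{T,U}$ to a common bilinear oscillatory integral and extract the main term via a Fourier-inversion stationary-phase argument. After rescaling $x = Mu$, $y = Nv$, both integrals take the form $MN \iint w(u,v)\,e(\phi(u,v))\,du\,dv$ with phase $\phi(u,v) = cMN\,uv - Mmu/q - Nnv/q$, which is exactly bilinear. Completing the square yields $\phi(u,v) = cMN(u-\sigma)(v-\tau) - mn/(cq^2)$ with unique critical point $(\sigma,\tau) := (n/(cqM), m/(cqN))$, and substituting $s = u-\sigma$, $t = v-\tau$ gives
\[
\widehat{G}_T\bigl(\tfrac{m}{q},\tfrac{n}{q}\bigr) = MN\,e\bigl({-}\tfrac{mn}{cq^2}\bigr) \iint A_G(s,t)\,e(cMN\,st)\,ds\,dt,
\]
with $A_G(s,t) := W_1(s+\sigma)W_2(t+\tau)$, and analogously $\widehat{H}_{T,U}$ with amplitude $A_H(s,t) := A_G(s,t)\,W_3(MN(s+\sigma)(t+\tau)/U)$ (using $(s+\sigma)(t+\tau) = uv$).

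To evaluate $I_A(\lambda) := \iint A(s,t)\,e(\lambda st)\,ds\,dt$ with $\lambda = cMN$, I would integrate in $s$ first to produce the partial Fourier transform $\widetilde{A}_s(-\lambda t, t)$, where $\widetilde{A}_s(\xi,t) := \int A(s,t)\,e(-\xi s)\,ds$; the change of variable $\xi = -\lambda t$ then gives $I_A(\lambda) = \lambda^{-1}\int \widetilde{A}_s(\xi, -\xi/\lambda)\,d\xi$. Taylor-expanding in the second argument around zero and invoking the Fourier inversion identity $\int \xi^k \partial_t^k \widetilde{A}_s(\xi,0)\,d\xi = \partial_s^k\partial_t^k A(0,0)/(2\pi i)^k$ produces the asymptotic series
\[
I_A(\lambda) = \sum_{k=0}^{K} \frac{(-1)^k\,\partial_s^k\partial_t^k A(0,0)}{k!(2\pi i)^k\,\lambda^{k+1}} + O_K\!\Bigl(\frac{(q^\varepsilon Q)^{2K+4}}{\lambda^{K+2}}\Bigr),
\]
with the remainder bounded using Schwartz decay of $\widetilde{A}_s(\xi,t)$ in $\xi$ (uniform in $t$), with constants controlled by the $(q^\varepsilon Q)$-inertness of $A$. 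Truncating at $K = 1$ and defining $\mathcal{W}(m,n) := A(0,0) - \partial_s\partial_t A(0,0)/(2\pi i\cdot cMN)$ yields the claimed main term $\tfrac{1}{c}\mathcal{W}(m,n)e(-mn/(cq^2))$ with error $MN\cdot O((q^\varepsilon Q)^6/(cMN)^3) = O((q^\varepsilon Q)^6/(c^3(MN)^2))$.

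Finally I would verify the inertness and support claims. Every summand of $\mathcal{W}$ contains a factor $W_1^{(a)}(\sigma)W_2^{(b)}(\tau)$ (together, for $\mathcal{W}_{T,U}$, with a factor $W_3^{(c)}(\zeta)$ where $\zeta := mn/(c^2q^2U)$), so $\mathcal{W}$ vanishes unless $\sigma,\tau \in [0.95,1.05]$; this places the support in $[0.95\,cqN,1.05\,cqN]\times[0.95\,cqM,1.05\,cqM]\subset[0.85\,cqN,1.15\,cqN]\times[0.85\,cqM,1.15\,cqM]$. For $(q^\varepsilon Q)$-inertness, the identities $n\partial_n\sigma = \sigma$, $m\partial_m\tau = \tau$, and $n\partial_n\zeta = m\partial_m\zeta = \zeta$ combined with the chain rule reduce inertness in $(m,n)$ to the $(q^\varepsilon Q)$-inertness of the $W_i$ at unit scale; the second-order correction survives because $(q^\varepsilon Q)^2/(cMN) \leq 1$ by the definition of $\mathcal{T}_1(\varepsilon)$.

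The main obstacle is the remainder estimate in the Taylor expansion: one must show that $\widetilde{A}_s(\xi,t)$ is Schwartz-decaying in $\xi$ with constants $\ll_L (q^\varepsilon Q)^L(1+|\xi|)^{-L}$ uniformly for $t$ in a neighborhood of $0$, which follows from the $(q^\varepsilon Q)$-inertness of $A$ in both variables via repeated integration by parts in $s$. For $A_H$ this is compounded by the need to expand the coupling factor $W_3(MN(s+\sigma)(t+\tau)/U)$ through the product rule when computing the mixed partials $\partial_s^k\partial_t^k A_H(0,0)$, and to check that the resulting expressions, written in the unit-scale parameters $\sigma,\tau,\zeta$, preserve $(q^\varepsilon Q)$-inertness in $(m,n)$ across the family $\mathcal{T}_1(\varepsilon)\times[1,\infty)$.
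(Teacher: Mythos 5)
Your proposal is correct, and it takes a genuinely different route from the paper. The paper first performs the de-linearizing change of variable $u \mapsto uN/v$ so that the phase becomes genuinely non-degenerate in $v$, and then applies the Kiral--Petrow--Young uniform stationary phase theorem twice, first in $v$ and then in $u$; each application produces a new inert family and a new phase, and the final form of the main term is read off from the second application. You instead exploit the fact that the phase is \emph{exactly} bilinear: after rescaling to unit-scale variables and completing the square, the integral collapses to $MN\, e(-mn/(cq^2)) \iint A(s,t)e(cMN\,st)\,ds\,dt$, and you evaluate the bilinear oscillatory integral by an explicit Fourier-inversion expansion, $\int \xi^k \partial_t^k \widetilde{A}_s(\xi,0)\,d\xi = \partial_s^k\partial_t^k A(0,0)/(2\pi i)^k$, truncated at $K=1$. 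Both routes give the same error $O\bigl((q^\varepsilon Q)^6 / (c^3(MN)^2)\bigr)$ and the same support $\sigma,\tau\in[0.95,1.05]$, and your inertness verification via $n\partial_n\sigma=\sigma$, $m\partial_m\tau=\tau$, $n\partial_n\zeta=m\partial_m\zeta=\zeta$ together with $(q^\varepsilon Q)^2 \leq cMN$ is correct. What you gain is a self-contained, fully explicit argument with a concrete formula for $\mathcal{W}_T$ (and hence $\mathcal{W}_{T,U}$) that makes the inertness and support claims transparent; what the paper's approach buys is modularity --- the KPY theorem handles the derivative bookkeeping and the inert-function bookkeeping automatically, which is convenient if the phase were not exactly bilinear. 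Your flagged ``main obstacle'' (uniform Schwartz decay of $\widetilde{A}_s(\xi,t)$ in $\xi$ with constants $\ll_L (q^\varepsilon Q)^L(1+|\xi|)^{-L}$, and the corresponding control of $\partial_s^k\partial_t^k A_H$ via the product rule on the $W_3$ coupling term) is exactly the right thing to check, and it does go through: on the support one has $s+\sigma,t+\tau\asymp 1$ and (since one may assume $MN/8<U<8MN$, else $H_{T,U}\equiv 0$) $MN/U\asymp 1$, so every derivative of $A$ or $A_H$ costs at most one factor of $q^\varepsilon Q$.
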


\begin{proof}
When $T \in \mathcal{T}_1(\varepsilon)$ and $(m,n)$ satisfies \eqref{statrange}, the integral $\widehat{G}_T(m/q,n/q)$ is stationary phase. 
\subsection{Technical set-up}
First,
\begin{equation} \label{fourier}
\overline{\widehat{G}_T \Big( \frac{m}{q},\frac{n}{q} \Big)}= \int_{\R^2} W_1 \Big( \frac{u}{M} \Big) W_2 \Big( \frac{v}{N} \Big) e \Big({-cuv+\frac{mu}{q}+\frac{nv}{q}}  \Big) du dv.
\end{equation}
Viewing $v$ as fixed, we make the change of variable $u \mapsto u N/v $ in the right side of \eqref{fourier} to obtain 
\begin{equation} \label{fourier2}
\overline{\widehat{G}_T \Big( \frac{m}{q},\frac{n}{q} \Big)}= \int_{\R^2} \frac{N}{v}  W_1 \Big(\frac{uN}{vM} \Big) W_2 \Big( \frac{v}{N} \Big) e \Big({-cNu+\frac{muN}{vq}+\frac{nv}{q} } \Big) du dv.
\end{equation}
This change of variable is made to de-linearize the phase function in \eqref{fourier} so that the method of stationary phase may be applied with respect to the $v$ variable.

Before computing the right side of \eqref{fourier2} we make the following modification for technical reasons. Let $W_4 :\R \rightarrow \R$ be any smooth function such that 
\begin{align*}
\text{supp} \hspace{0.05cm} W_4 \subset \big[0.85,1.15 \big],  &\quad 0 \leq W_4 \leq 1, \quad  W_4(x)=1 \quad \text{for} \quad 0.89 \leq x \leq 1.11, \\
\quad \text{and}  \quad & W_4^{(j)}(x) \ll_{\varepsilon,j} (q^{\varepsilon} Q)^j   \quad \text{for any}  \quad x \geq 0, j \geq 0,
\end{align*}
with the same system of constants $\mathcal{C}(\varepsilon)$. Then for $(m,n)$ satisfying \eqref{statrange}, we have 
\begin{multline} \label{modify}
\overline{\widehat{G}_T \Big( \frac{m}{q},\frac{n}{q} \Big)}= \int_{\R} W_4 \Big( \frac{u}{M} \Big) \int_{\R} \frac{N}{v}  W_1 \Big(\frac{uN}{vM} \Big)  \\
\times W_2 \Big( \frac{v}{N} \Big) W_4 \Big( \frac{m}{cqN} \Big) W_4 \Big( \frac{n}{cqM} \Big) e \Big({-cNu+\frac{muN}{vq}+\frac{nv}{q} } \Big) dv du.
\end{multline}
\subsection{Integration with respect to $v$}
Set 
\begin{align}
\Phi_{T}(u,v;m,n)&:=\Phi(u,v;m,n)=-cNu+\frac{muN}{vq}+\frac{nv}{q} \nonumber \\
w_T(u,v;m,n)&:=\frac{N}{v} W_1 \Big( \frac{uN}{vM} \Big) W_2 \Big( \frac{v}{N} \Big) W_4 \Big( \frac{m}{cqN} \Big) W_4 \Big( \frac{n}{cqM} \Big)  \label{inerty}.
\end{align} 
Observe that $\Phi_T$ satisfies \eqref{phibd} with $Y=cMN$ with respect to the variables $u,v,m$ and $n$ on their respective supports. Furthermore, $w_T$ is $X_T$-inert in the variables $u,v,m$ and $n$ and has a system of constants determined by $\mathcal{C}(\varepsilon)$. Now,
\begin{equation} \label{derivatives}
\Phi_v(u,v;m,n)=\frac{n}{q}-\frac{muN}{v^2 q} \quad \text{and} \quad \Phi_{vv}(u,v)=\frac{2muN}{v^3 q}.
\end{equation} 
Thus \eqref{secondstat} is satisfied and the critical point in the $v$ variable is located at
\begin{equation*}
\tilde{v}_{T}(u;m,n):=\tilde{v}=\sqrt{\frac{muN}{n}} \asymp N.
\end{equation*}
We apply the Theorem \ref{KPY} to the right side of \eqref{modify} with 
\begin{equation*}
Y=cMN, \quad Z=0.95N, \quad A=3, \quad \text{and} \quad R=\frac{cMN}{(q^{\varepsilon} Q)^2} \geq 1
\end{equation*}
to obtain 
\begin{equation} \label{secondint}
\overline{\widehat{G}_T \Big(\frac{m}{q},\frac{n}{q} \Big)}=\Big(\frac{N}{cM} \Big)^{\frac{1}{2}} \int_{\R}  \mathcal{W}^{1}_{T}(u;m,n)  e \Big({-cNu+\frac{2 (mnuN)^{\frac{1}{2}}}{q}}  \Big) du+O\Big( \frac{(q^{\varepsilon} Q)^6}{c^3 (MN)^2} \Big),
\end{equation} 
for some family of $X_{T}=q^{\varepsilon} Q$-inert functions $\mathcal{W}^{1}_{T}(u;m,n)$ with support the same as $w_T$ in the variables $u,m$ and $n$ and has a system of constants determined by $\mathcal{C}(\varepsilon)$. Now,
\begin{equation} \label{secondint2}
\widehat{G}_T \Big(\frac{m}{q},\frac{n}{q} \Big)=\Big(\frac{N}{cM} \Big)^{\frac{1}{2}} \int_{\R}  \mathcal{W}^{1}_{T}(u;m,n)  e \Big(cNu-\frac{2 (mnuN)^{\frac{1}{2}}}{q}  \Big) du+O \Big( \frac{(q^{ \varepsilon} Q)^6}{c^3 (MN)^2} \Big).
\end{equation}
Note in the above line we used the fact that the conjugate of an $X_T$-inert function is $X_T$-inert (we did not relabel the conjugated family of functions).

\subsection{Integration with respect to $u$}
Let 
\begin{equation*}
\tilde{\Phi}_T(u;m,n):=\Phi_T(u,\tilde{v};m,n)=cNu-\frac{2 (mnuN)^{\frac{1}{2}}}{q}.
\end{equation*}
Observe that $\tilde{\Phi}_T$ satisfies \eqref{phibd} with $Y=cMN$ with respect to the variables $u, m$ and $n$. Now,
\begin{equation*}
\tilde{\Phi}_u(u;m,n)=cN- \frac{(mnN)^{\frac{1}{2}}}{q u^{\frac{1}{2}}} \quad \text{and} \quad \tilde{\Phi}_{uu}(u;m,n)= \frac{(mnN)^{\frac{1}{2}}}{2 q u^{\frac{3}{2}}}.
\end{equation*}
Thus \eqref{secondstat} is satisfied and the critical point in the $u$ variable is located at 
\begin{equation*}
\tilde{u}_{T}(m,n):=\tilde{u}=\frac{mn}{q^2 c^2 N} \asymp M.
\end{equation*}
We apply the Theorem \ref{KPY} to the integral in \eqref{secondint} with
\begin{equation*}
Y=cMN, \quad Z=0.89M, \quad A=\frac{5}{2}, \quad \text{and} \quad R=\frac{cMN}{(q^{\varepsilon} Q)^2} \geq 1
\end{equation*}
to obtain 
\begin{align*}
\widehat{G}_T \Big(\frac{m}{q},\frac{n}{q} \Big)&=\frac{1}{c} \mathcal{W}^{2}_T(m,n) e \Big({-\frac{mn}{cq^2} }\Big)+O \Big( \frac{(q^{\varepsilon} Q)^6}{c^3 (MN)^2} \Big)+O \Big(   \Big(\frac{N}{cM} \Big)^{\frac{1}{2}} M \Big( \frac{(q^{\varepsilon} Q)^2}{cMN} \Big)^{\frac{5}{2}} \Big) \\
&=\frac{1}{c} \mathcal{W}^{2}_T(m,n) e \Big({-\frac{mn}{cq^2} }\Big)+O \Big( \frac{(q^{ \varepsilon} Q)^6}{c^3 (MN)^2} \Big),
\end{align*}
where $\mathcal{W}^{2}_T(m,n)$ denotes a family of $X_T:=q^{\varepsilon} Q$ functions in the variables $m$ and $n$ with support the same as $w_T$. That completes the proof of \eqref{equal}.

\subsection{Remarks and proof of \eqref{equal2}}
Without loss of generality we may assume $MN/8<U<8MN$, otherwise $H_{T,U} \equiv 0$. Define the function
\begin{equation*}
W_{T,U}(x,y):=W_1 \Big(\frac{x}{M} \Big) W_2 \Big(\frac{y}{N} \Big) W_3 \Big(\frac{xy}{U} \Big).
\end{equation*} 
Thus \eqref{triple3} holds and so $W_{T,U}:\mathbb{R}^2 \rightarrow \mathbb{R}$ is a family of $X_{T,U}$-inert functions with system of constants determined by $\mathcal{C}(\varepsilon)$. Thus the argument above can be followed to establish \eqref{equal2}.
\end{proof}

\section{Bilinear forms involving Kloosterman sums and oscillatory weights} \label{bilinos}
We now use the results in Sections \ref{nonstatforms} and \ref{statforms} to prove an estimate for a bilinear form involving Kloosterman sums. Let $\varepsilon>0$, $\mathbb{P}$ denote the set of prime numbers and 
\begin{multline*}
\mathcal{T}_2(\varepsilon):=\Big \{(c,q,M,N,Q) \in \R_{>0} \times \mathbb{P} \times \R_{\geq \frac{1}{2}}^2 \times \R_{\geq 1}: \\
\frac{cMN}{(q^{\varepsilon} Q)^2} \geq 1,  M \leq N \ll q, MN \ll q, \emph{ and } cN \geq 6 \Big \}.
\end{multline*} 
 
\begin{prop} \label{bilinprop}
Let $\varepsilon>0$, $T:=(c,q,M,N,Q) \in \mathcal{T}_2(\varepsilon)$ and $U \geq 1$. Furthermore, let $W_i$, $G_T$ and $H_{T,U}$ be as in \eqref{smooth}, \eqref{G} and \eqref{H} respectively. Then for all $a \in \N$ such that $(a,q)=1$ we have 
\begin{equation} \label{bilin1}
\sum_{m, n \in \Z} G_T(m,n)  \emph{Kl}_2(amn;q)  \ll (q^{\varepsilon} Q)^2 M q^{\frac{1}{2}} \Big(1+\frac{Q^4}{cM^2N} \Big),
\end{equation}
and 
\begin{equation} \label{bilin2}
\sum_{m, n \in \Z}  H_{T,U}(m,n) \emph{Kl}_2(amn;q) \ll (q^{\varepsilon} Q)^2 M q^{\frac{1}{2}} \Big(1+\frac{Q^4}{cM^2N} \Big).
\end{equation}
All implied constants depend only on $\varepsilon>0$ and $\mathcal{C}(\varepsilon)$.
\end{prop}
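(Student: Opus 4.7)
The plan is to apply the tempered Voronoi summation formula (Proposition \ref{temper}) with $K(x) = \mathrm{Kl}_2(ax;q)$. Since $\widehat{K}(0) = 0$, the ``constant'' term in \eqref{voronoi} vanishes, and the explicit form $\widecheck{K}(mn) = q^{1/2}\mathbf{1}_{mn \equiv a(q)} - q^{-1/2}$ recorded in Section \ref{voronoisec} decomposes the dual sum as
\begin{equation*}
\sum_{m,n} G_T(m,n) \mathrm{Kl}_2(amn;q) = \frac{1}{q^{1/2}} \sum_{\substack{m,n \in \Z \\ mn \equiv a(q)}} \widehat{G}_T\!\left(\frac{m}{q}, \frac{n}{q}\right) - \frac{1}{q^{3/2}} \sum_{m,n \in \Z} \widehat{G}_T\!\left(\frac{m}{q}, \frac{n}{q}\right).
\end{equation*}
The final ``polar'' sum equals $-q^{1/2} \sum_{k,l \in \Z} G_T(kq, lq)$ by two-variable Poisson summation, and this vanishes by the support of $G_T$ together with the assumption $MN \ll q$ in $\mathcal{T}_2(\varepsilon)$.

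Next I partition $(m,n) \in \Z^2$ into the stationary region $\mathcal{S} = [0.9cqN, 1.1cqN] \times [0.9cqM, 1.1cqM]$ and its complement. On the complement I invoke Lemma \ref{oscill2}: the rapid decay of $\widehat{G}_T(m/q, n/q)$ in each of the four regions allows me to sum over $mn \equiv a(q)$ and absorb each contribution into the right-hand side of \eqref{bilin1}. On $\mathcal{S}$ I apply Lemma \ref{oscill} to replace $\widehat{G}_T(m/q, n/q)$ by $\frac{1}{c}\mathcal{W}_T(m,n) e(-mn/(cq^2))$ plus the error $O((q^{\varepsilon} Q)^6/(c^3(MN)^2))$; multiplying the error by the $O(c^2 qMN)$ lattice points of $\mathcal{S} \cap \{mn \equiv a(q)\}$ and dividing by $q^{1/2}$ reproduces the second summand $(q^{\varepsilon} Q)^2 Mq^{1/2} \cdot Q^4/(cM^2 N)$ in the claimed bound.

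The core of the argument handles the main stationary contribution
\begin{equation*}
\frac{1}{cq^{1/2}} \sum_{\substack{(m,n) \in \mathcal{S} \\ mn \equiv a(q)}} \mathcal{W}_T(m,n)\, e\!\left(-\frac{mn}{cq^2}\right).
\end{equation*}
Parametrizing $n = a\bar{m} + jq$ for $(m,q)=1$, writing $m\bar{m} = 1 + \ell_m q$, and expanding $-mn/(cq^2) = -a/(cq^2) - a\ell_m/(cq) - mj/(cq)$, I apply Poisson summation in $j$. The Poisson phase produced by the shift $n \mapsto a\bar m + jq$ contributes $e((h+m/(cq))a\bar m/q)$; expanding this using $m\bar m = 1 + \ell_m q$ exactly cancels the external $\ell_m$-phase, leaving
\begin{equation*}
\frac{1}{cq^{3/2}} \sum_{m, h \in \Z} e\!\left(\frac{ha\bar{m}}{q}\right) \widetilde{\mathcal{W}}_T\!\left(m, \frac{h + m/(cq)}{q}\right),
\end{equation*}
where $\widetilde{\mathcal{W}}_T$ denotes the Fourier transform of $\mathcal{W}_T$ in the second variable. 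The $q^{\varepsilon}Q$-inertness of $\mathcal{W}_T$ in its second argument localizes this weight to $|h + m/(cq)| \lesssim q^{\varepsilon}Q/(cM)$. For each relevant $h \in [-1.1N,-0.9N]$ I then complete the $m$-sum modulo $q$, express it as $\frac{1}{q}\sum_{l \bmod q} \hat{W}_h(l/q) S(ha, l; q)$, and invoke the Weil bound $|S(\alpha,\beta;q)| \le 2q^{1/2}$. Summing the resulting per-$h$ bound over the $O(N)$ relevant frequencies and combining with the prefactor is designed to yield the main bound $(q^{\varepsilon} Q)^2 Mq^{1/2}$. The argument for $H_{T,U}$ is identical: the extra inert factor $W_3(xy/U)$ enlarges the inert family (with $X_{T,U} = q^{\varepsilon}Q$) but leaves the stationary phase asymptotic \eqref{equal2} and the Voronoi-plus-Poisson machinery unchanged.

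The hardest step will be the final Polya--Vinogradov--Weil estimate: one must coordinate the effective $m$-support (of length $\sim q^{1+\varepsilon} Q/M$ at fixed $h$), the inert decay of $\widetilde{\mathcal{W}}_T$, and the Weil saving $q^{1/2}$ so that summing over $h$ and dividing by $cq^{3/2}$ extracts exactly $Mq^{1/2}$ rather than the weaker $MN$ produced by a trivial execution. I expect the sharpness to rest on evaluating $\sum_{l \bmod q} |\hat{W}_h(l/q)|$ with the correct inert scaling (so that the effective number of nonzero Fourier coefficients is $\sim qX_T/L_h$, not the naive $q$) and then summing over $h$ with the length bound $\sim N$ of the relevant range.
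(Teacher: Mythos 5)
Your Voronoi decomposition, the use of Lemma \ref{oscill2} for the non-stationary ranges, the application of Lemma \ref{oscill} on the box $\mathcal{S}$, and the way you account for the $O\bigl((q^{\varepsilon}Q)^6/(c^3(MN)^2)\bigr)$ error over the $O(c^2qMN)$ congruence-restricted lattice points all agree with the paper's argument (and your algebraic cancellation of the $\ell_m$-phase in the Poisson step is correct). The divergence is in how the main stationary contribution is estimated, and this is where your proposal has a genuine gap. Carry your per-$h$ Polya--Vinogradov--Weil estimate through carefully: $g_h(m):=\widetilde{\mathcal{W}}_T\bigl(m,(h+m/(cq))/q\bigr)$ has $\|g_h\|_\infty\ll cqM$, effective $m$-support of length $L_h\asymp q^{1+\varepsilon}Q/M$, and is $q^{\varepsilon}Q$-inert on that support, so your "$\sim qX_T/L_h=M$ effective Fourier modes" count gives
\begin{equation*}
\sum_m e\Bigl(\tfrac{a\bar m h}{q}\Bigr)g_h(m)\ \ll\ \frac{1}{q}\cdot M\cdot\|g_h\|_1\cdot q^{\frac12}\ \ll\ \frac{1}{q}\cdot M\cdot(cqM\cdot L_h)\cdot q^{\frac12}\ \ll\ cq^{\frac32+\varepsilon}QM.
\end{equation*}
Summing this over the $\asymp N$ relevant $h$ and dividing by $cq^{3/2}$ gives $\ll q^{\varepsilon}QMN$, \emph{not} $(q^{\varepsilon}Q)^2Mq^{1/2}$. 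The discrepancy factor is $N/(q^{1/2+\varepsilon}Q)$, and since $\mathcal{T}_2(\varepsilon)$ permits $N$ up to $\asymp q$ while $M\ll q^{1/2}$, this factor can be as large as $q^{1/2}/Q$. In fact the regime $N\gg q^{1/2}$ is precisely where Proposition \ref{bilinprop} must perform (see the paper's remark following the statement), so the loss is fatal, not cosmetic. The underlying issue is structural: bounding each $m$-sum separately by Weil forfeits all cancellation \emph{across} $h$, and no amount of sharpening the inert Fourier count changes the $N$ factor.

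The paper avoids this by never invoking Weil at all. It Fourier-expands the congruence $mn\equiv a\pmod q$ with a complete additive character sum over $h\bmod q$ (not a Poisson dual variable of bounded size), Abel-sums away the inert weight to reach \eqref{setup6}, bounds the $m$-geometric series by $\min\bigl(\|\bar ahn/q - n/(cq^2)\|^{-1},cqN\bigr)$, and then performs an Erdos--Turan-type count over the residue $r\equiv \bar ahn\pmod q$. The point is that for fixed $n$ the sum over $r\bmod q$ of that $\min$ is $\ll q\log q$, so the $q-1$ values of $h$ and the $\asymp cqM$ values of $n$ together yield $\ll q\log q\cdot cqM/(cq^{3/2})\ll q^{1/2}M\log q$, picking up the full $q^{1/2}$-worth of saving from the $h$-average that a per-$h$ Weil bound cannot see. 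To repair your route you would need to exhibit cancellation in $\sum_h \hat g_h(k/q)S(ah,k;q)$ jointly in $h$ and $k$; it is not clear how to do that cleanly, and the paper's elementary geometric-series count is both simpler and sharper. (Your observation that the polar term $-q^{-3/2}\sum_{m,n}\widehat{G}_T(m/q,n/q)$ vanishes by Poisson and the support of $G_T$ is a nice but inessential simplification; the paper instead estimates it in parallel with the main term.)
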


Observe that the bounds \eqref{bilin1} and \eqref{bilin2} perform well when $N \gg q^{\frac{1}{2}}$. Also note that for all $\varepsilon>0$ we have $\mathcal{T}_2(\varepsilon) \subset \mathcal{T}_1(\varepsilon)$, where $\mathcal{T}_1(\varepsilon)$ is defined in \eqref{T}.

\begin{proof}
To prove \eqref{bilin1}, we apply Proposition \ref{temper} with 
\begin{equation*}
G_T(x,y):= W_1 \Big(\frac{x}{M} \Big) W_2 \Big(\frac{y}{N} \Big) e(cxy) \quad  \text{and} \quad K(mn):=\text{Kl}_2(amn;q).
\end{equation*}
Observe that $\widehat{K}(0)=0$, so the first term on the right side of \eqref{voronoi} vanishes. Note that the second sum in \eqref{voronoi} is absolutely convergent by Lemma \ref{oscill2}. The second term on the right side of \eqref{voronoi} becomes
\begin{equation*} 
-\frac{1}{q^{\frac{3}{2}}}  \sum_{\substack{m,n \in \Z \\ mn \not \equiv a \mod{q} }}  \widehat{G}_T \Big(\frac{m}{q},\frac{n}{q} \Big)+\frac{q-1}{q^{\frac{3}{2}}} \sum_{\substack{m,n \in \Z \\ mn \equiv a \mod{q} }} \widehat{G}_T \Big(\frac{m}{q},\frac{n}{q} \Big).
\end{equation*}
Further simplifying, this becomes
\begin{equation} \label{setup4}
-\frac{1}{q^{\frac{3}{2}}}  \sum_{m,n \in \Z}  \widehat{G}_T \Big(\frac{m}{q},\frac{n}{q} \Big)+\frac{1}{q^{\frac{1}{2}}} \sum_{\substack{m,n \in \Z \\ mn \equiv a \mod{q} }} \widehat{G}_T \Big(\frac{m}{q},\frac{n}{q} \Big).
\end{equation}

We now estimate the summation over $m$ and $n$ in \eqref{setup4}. We break this into two cases that depend on whether the summand is non-stationary or stationary phase integral.

\subsection{Non-stationary phase contribution}
Here we estimate the contribution to \eqref{setup4} in the ranges for $m$ and $n$ occurring in the statement of Lemma \ref{oscill2}. We do this with second sum in \eqref{setup4}. A similar contribution from the first sum in \eqref{setup4} will follow from an easier computation. We use \eqref{nonstat} to bound $\widehat{G}_T$. We then use the facts that $cN \geq 6$ and $N \ll q$  to obtain the following estimates:
\begin{align}
\frac{1}{q^{\frac{1}{2}}} \sum_{|n| \geq 1.1cqM} \sum_{\substack{|m| \geq 1.1cqN  \\ mn \equiv a \pmod{q}}}  \frac{(q^{1+\varepsilon} Q)^{2+\varepsilon}}{|mn|^{1+\varepsilon}} & \ll q^{\frac{1}{2}+\varepsilon} Q^{2+\varepsilon}, \nonumber \\
 \frac{1}{q^{\frac{1}{2}}} \sum_{|n| \leq 1.1cqM} \sum_{\substack{|m| \geq 1.1cqN \\ mn \equiv a \pmod{q}}}  M N \Big(\frac{q^{1+\varepsilon} Q}{|m| M} \Big)^{2+\varepsilon} & \ll q^{\frac{1}{2}+\varepsilon} Q^{2+\varepsilon}, \nonumber \\
  \frac{1}{q^{\frac{1}{2}}} \sum_{|n| \geq 1.1cqM} \sum_{\substack{|m| \leq 1.1cqN \\ mn \equiv a \pmod{q}}}  M N \Big(\frac{q^{1+\varepsilon} Q}{|n| N} \Big)^{2+\varepsilon}  & \ll q^{\frac{1}{2}+\varepsilon} Q^{2+\varepsilon}, \nonumber \\
\frac{1}{q^{\frac{1}{2}}} \sum_{-1.1cqM  \leq n \leq 0.9 cqM} \sum_{\substack{ |m| \leq 1.1cqN \\ mn \equiv a \pmod{q} }} \frac{(q^{\varepsilon} Q)^2}{c^2 MN} & \ll q^{\frac{1}{2}+\varepsilon} Q^{2+\varepsilon} \label{finite}.
\end{align}
The case $-1.1cqN \leq m \leq 0.9 cqN$ and $|n| \leq 1.1cqM$ is handled similarly to \eqref{finite}.

\subsection{Stationary phase contribution}
Now we consider estimating the contribution to \eqref{setup4} from the ranges for $m$ and $n$ occurring in the statement of Lemma \ref{oscill}. Applying Lemma \ref{oscill} and estimates above, \eqref{setup4} becomes 
\begin{multline} \label{setup5}
\Bigg(-\frac{1}{q^{\frac{3}{2}}} \sum_{\substack{0.9cqN \leq m \leq 1.1cqN \\  0.9cqM \leq n \leq 1.1cqM }}+\frac{1}{q^{\frac{1}{2}}} \sum_{\substack{0.9cqN \leq m \leq 1.1cqN \\  0.9cqM \leq n \leq 1.1cqM \\ mn \equiv a \hspace{-0.25cm} \mod{q} }}   \Bigg) \frac{1}{c} \mathcal{W}_T(m,n)  e \Big({-\frac{mn}{cq^2}} \Big) \\
 +O \Big( \frac{q^{\frac{1}{2}} (q^{\varepsilon}Q)^6}{c M N} \Big)+O(q^{\frac{1}{2}} \big(q^{\varepsilon} Q)^2 \big),
\end{multline}
where $T \in \mathcal{T}_2(\varepsilon) \subset \mathcal{T}_1(\varepsilon)$ and $\mathcal{W}_T$ has support contained in \eqref{mnsupport}. Fourier expanding the congruence condition $\overline{a} mn \equiv 1 \pmod{q}$ as
\begin{equation*}
\mathbf{1}_{\overline{a} mn \equiv 1 \hspace{-0.25cm} \mod{q}}=\frac{1}{q} \sum_{h=0}^{q-1} e \Big( \frac{h (\overline{a} mn-1)}{q} \Big),
\end{equation*}
we see that \eqref{setup5} becomes
\begin{equation} \label{setup3}
\frac{1}{c q^{\frac{3}{2}}} \sum_{h=1}^{q-1} e \Big({-\frac{h}{q}} \Big) \sum_{\substack{0.9 cqN \leq m \leq 1.1cqN \\ 0.9cqM \leq n \leq 1.1cqM}} \mathcal{W}_T(m,n)   e \Big ( \frac{\overline{a}hmn}{q}-\frac{mn}{cq^2} \Big)+\mathcal{E},
\end{equation} 
where $\mathcal{E}$ denotes the same error terms occurring in \eqref{setup5}.  Applying Abel summation to \eqref{setup3} and using the fact that $\mathcal{W}_T$ is $q^{\varepsilon}Q$-inert with respect to $m$ and $n$, we see that \eqref{setup3} is 
\begin{equation} \label{setup6}
\ll \frac{(q^{\varepsilon} Q)^2}{c q^{\frac{3}{2}}} \sum_{h=1}^{q-1} \max_{\substack{0.9cqN \leq C_h \leq  1.1cqN  \\  0.9cqM \leq D_h \leq 1.1cqM}} \Bigg | \sum_{\substack{0.85cqN \leq m \leq C_h \\
0.85cqM \leq n \leq D_h}} e \Big({\frac{\overline{a} hmn  }{q} -\frac{mn}{cq^2}   }\Big)  \Bigg|+\mathcal{E}.
\end{equation}
We treat the long sum over $m$ as a geometric series. Letting $-q/2 \leq r \leq q/2$ be such that $\overline{a} hn \equiv r \pmod{q}$, we see that \eqref{setup6} is 
\begin{multline} \label{S2}
 \ll \frac{(q^{\varepsilon} Q)^2}{c q^{\frac{3}{2}} } \sum_{0 \leq |r| \leq q/2} \hspace{0.25cm} \sum_{0.85cqM \leq n \leq 1.1cqM} \max  \Big(  \Big \|  \frac{r}{q} -\frac{n}{cq^2}   \Big \|^{-1},   cqN \Big) \\
 \times \Bigg( \sum_{\substack{1 \leq h \leq q-1 \\ \overline{a} hn \equiv r \pmod{q} }} 1 \Bigg)+\mathcal{E},
\end{multline}  
where $\| \alpha \|$ denotes the distance from $\alpha \in \R$ to the nearest integer. 

When $r=0$ and $n \equiv 0 \pmod{q}$ in \eqref{S2}, then the innermost sum over $h$ is $q-1$. This can only happen if $1.1cM \geq 1$.  Observe that 
\begin{equation} \label{obs}
0<\frac{0.85M}{q} \leq \frac{n}{cq^2} \leq \frac{1.1M}{q}<0.2 \quad \text{for all sufficiently large primes $q$},
\end{equation}
where the last inequality follows since $1/2 \leq M \ll q^{\frac{1}{2}}$. Thus the contribution to \eqref{S2} when $r=0$ and $n \equiv 0 \pmod{q}$ is 
\begin{equation*}
\ll  \frac{(q^{\varepsilon} Q)^2}{c q^{\frac{3}{2}}} (q-1)  \sum_{l=1}^{1.1cM} \frac{cq^2}{lq} \ll q^{\frac{1}{2}} (q^{\varepsilon} Q)^2.
\end{equation*}
There is no $0<n \leq 1.1cqM$ such that $n \equiv 0 \pmod{q}$ if $1.1cM<1$.

In the summation over $h$ in \eqref{S2}, we have $r \neq 0$ if and only if $n \not \equiv 0 \pmod{q}$, otherwise the summation is empty. When $r \neq 0$, the innermost sum is 1. We have the three cases to consider in \eqref{S2}. They are
\begin{equation*}
0 \leq |r|<0.7 M, \quad  0.7 M \leq |r| \leq 1.2 M \quad \text{and} \quad 1.2 M<|r| \leq q/2.
\end{equation*}
This follows from \eqref{obs}.

The contribution to \eqref{S2} from the first case is   
\begin{equation*}
\ll \frac{(q^{\varepsilon} Q)^2}{c q^{\frac{3}{2}}} \sum_{0 \leq |r|< 0.7 M} \hspace{0.2cm}  \sum_{1 \leq n \leq 1.1cqM} \frac{cq^2}{n} \ll (q^{\varepsilon} Q)^2 q^{\frac{1}{2}} M.
\end{equation*}

The contribution to \eqref{S2} from the third case is 
\begin{equation*}
\ll \frac{(q^{\varepsilon} Q)^2}{c q^{\frac{3}{2}}} \sum_{1 \leq n \leq 1.1cqM} \hspace{0.2cm}  \sum_{1.2 M< |r| \leq q/2}  \frac{q}{r} \ll (q^{\varepsilon} Q)^2 q^{\frac{1}{2}} M.
\end{equation*}

We now consider the second case. We may replace $\| \cdot \|$ with $| \cdot |$ in \eqref{S2}. For fixed $r$ satisfying $0.7 M \leq |r| \leq 1.2 M$, the values of $n$ such that
\begin{equation*}
\Big |  \frac{r}{q} -\frac{n}{cq^2}   \Big |^{-1} \geq   cqN
\end{equation*}
are contained in an interval (denote it $B_r$) of length $2q/N$. The contribution to \eqref{S2} from such pairs $(r,n)$ is 
\begin{equation*}
\ll (q^{\varepsilon} Q)^2 q^{\frac{1}{2}} M.
\end{equation*}
We now estimate the contribution to \eqref{S2} from the pairs $(r,n)$ such that 
\begin{equation*}
\Big |  \frac{r}{q} -\frac{n}{cq^2}   \Big |^{-1} \leq   cqN.
\end{equation*}
Such a contribution is 
\begin{align}
& \ll \frac{(q^{\varepsilon} Q)^2}{c q^{\frac{3}{2}}} \sum_{0.7 M \leq |r| \leq 1.2 M} \hspace{0.2cm}  \sum_{\substack{1 \leq n \leq 1.1cqM \\ n \not \in B_r}} \Big | \frac{r}{q} -\frac{n}{cq^2}   \Big |^{-1} \nonumber \\
& \ll (q^{\varepsilon} Q)^2  q^{\frac{1}{2}} \sum_{0.7 M \leq |r| \leq 1.2 M} \hspace{0.2cm}  \sum_{\substack{1 \leq n \leq 1.1cqM \\ n \not \in B_r}} |r c q-n |^{-1}. \label{latbd}
\end{align}
Note that $n \in \Z_{\geq 1} \setminus B_r$ implies that 
\begin{equation*}
 |r c q-n | \geq \frac{q}{N} \gg 1.
\end{equation*}
Furthermore, distinct elements of
\begin{equation*}
\{|r c q-n | : n \in \Z_{\geq 1} \setminus B_r \}
\end{equation*}
have a difference that is greater than or equal to 1 in absolute value. Each value in the above set is achieved by at most two values of $n \in \Z_{\geq 1} \setminus B_r$. 
Thus the quantity in \eqref{latbd} is 
\begin{equation*}
\ll (q^{\varepsilon} Q)^2  q^{\frac{1}{2}} M.
\end{equation*}
Putting all of this together yields \eqref{bilin1}.

\subsection{Remarks and proof of \eqref{bilin2}}
Observe that \eqref{bilin2} follows by exactly the same argument as above. The main difference in the argument is that we appeal to \eqref{equal2} instead of \eqref{equal}.
\end{proof}

\section{Smoothed version of Main Theorem} \label{smoothpropsec}
The passage to proving Theorem \ref{nonsmooththm} first involves establishing an estimate for a smoothed sum of Kloosterman sums.
\begin{prop} \label{smoothprop}
Let $q$ be a prime number and $u,v \in \mathbb{N}$ be such that $(u,v)=1$ and $ v \leq q$. Let $Q \geq 1$, $\varepsilon>0$ and $W:[0,\infty] \rightarrow \R$ be a smooth function satisfying 
\begin{equation} \label{smoothhyp}
\emph{supp } W \subset [0.95,1.05] \quad \text{and} \quad W^{(j)}(x) \ll_{j,\varepsilon} (q^{\varepsilon} Q)^j \text{ for any } x \geq 0 \text{ and } j \geq 0. 
\end{equation}
 For all  
 \begin{equation} \label{Xhypstat}
 \max \{v^{\frac{6}{5}} (q^{\varepsilon}Q)^{\frac{24}{5}}, q^{\frac{3}{5}}, q^{-\frac{6}{5}+\frac{72}{5} \varepsilon} Q^{\frac{72}{5}} v^{\frac{18}{5}} \}  \ll X  \leq q
 \end{equation}
 we have 
\begin{equation} \label{smoothstat}
\sum_{\substack{p \equiv u \hspace{-0.25cm} \mod v \\ p \emph{ prime} }} W \Big( \frac{p}{X} \Big)   \emph{Kl}_2(p,q) \ll (qQ)^{\varepsilon} \big(Q^{2} X^{\frac{13}{18}} q^{\frac{1}{6}}+Q^{\frac{1}{2}} X^{\frac{2}{3}} q^{\frac{1}{4}} + Q^2 q^{\frac{11}{64}} X^{\frac{13}{16}} \big).
\end{equation} 
The implied constants depend only on $\varepsilon>0$ and $\mathcal{C}(\varepsilon)$.
\end{prop}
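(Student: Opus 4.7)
The plan is to follow the combinatorial optimisation scheme of Fouvry--Kowalski--Michel \cite{FKM} as adapted in \cite{BFKPM}, using Heath--Brown's identity to reduce the sum over primes to bilinear forms in Kloosterman sums, and then apportioning these forms to Proposition \ref{bilinprop}, Theorem \ref{FKM}, or the smoothed version of Theorem \ref{KMS} given in \eqref{smthbd}, according to the relative lengths of the two variables.

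First I would pass from the sum over primes to the corresponding sum with the von Mangoldt function $\Lambda(n)$, at the cost of a harmless contribution from prime powers and a logarithmic factor. An application of Heath--Brown's identity at depth $k$ (say $k=6$) then writes $\Lambda(n)\mathbf{1}_{n\le X}$ as $O(\log^{C} X)$ iterated convolutions of $\mu$ and $\log$ against the indicator $\mathbf{1}_{n\le X^{1/k}}$. After a smooth dyadic partition of unity in each factor, one is reduced to bounding, for each admissible dyadic tuple with product $\asymp X$, the two-variable bilinear sum
\begin{equation*}
B(M,N):=\sum_{\substack{m,n\ge 1 \\ mn\equiv u\,(v)}}\alpha_m\,\beta_n\, W\Big(\frac{mn}{X}\Big)\,\text{Kl}_2(mn;q),
\end{equation*}
where $\alpha,\beta$ are supported in $[M,2M]$ and $[N,2N]$ with $\|\alpha\|_\infty,\|\beta\|_\infty\ll q^\varepsilon$ (``Type~II''), or one of them is a smooth coefficient (``Type~I''). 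I may assume $M\le N$, so that $M\le X^{1/2}\le N$.

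For each such $(M,N)$ the congruence is detected in one of two ways. In the \emph{additive} regime I use
\begin{equation*}
\mathbf{1}_{mn \equiv u \,(v)} = \frac{1}{v}\sum_{h \,(v)} e\Big(\frac{h(mn-u)}{v}\Big),
\end{equation*}
so the contribution from each $h$ is a bilinear form weighted by $e(hmn/v)W(mn/X)$, precisely of the shape covered by \eqref{bilin2} of Proposition \ref{bilinprop} with $c=h/v$; this delivers a saving when $N\gg q^{1/2}$. In the \emph{multiplicative} regime I use instead
\begin{equation*}
\mathbf{1}_{mn \equiv u \,(v)} = \frac{1}{\phi(v)}\sum_{\chi \,(v)} \overline{\chi}(u)\chi(m)\chi(n),
\end{equation*}
which twists the coefficients by $\chi$ without changing their $\ell^2$ norms, so that Theorem \ref{FKM} and \eqref{smthbd} apply directly.

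The proof then consists of choosing two thresholds $N_1<N_2$ on $N$ and applying: Proposition \ref{bilinprop} with additive detection for $N\ge N_2$, producing (after summing over $h$ and absorbing the $v$-aspect) the term $Q^{2}X^{13/18}q^{1/6}$; the smoothed KMS bound \eqref{smthbd} with multiplicative detection for $N_1\le N<N_2$, producing $Q^{2}q^{11/64}X^{13/16}$; and Theorem \ref{FKM} with multiplicative detection for $N<N_1$, producing $Q^{1/2}X^{2/3}q^{1/4}$. The optimal thresholds are obtained by equating the bounds delivered by each pair of consecutive regimes; the lower bound \eqref{Xhypstat} on $X$ is precisely what is needed to place these thresholds in the admissible ranges of each estimate---in particular to force $N_2\ge q^{1/2}$ for Proposition \ref{bilinprop} and to meet the Polya--Vinogradov constraint $q^{1/4}<MN<q^{5/4}$ throughout the middle regime. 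The principal obstacle is this three-way optimisation: one must carefully track powers of $Q$, $v$, and the ratio $M/N$, and verify that the side-contributions (the $h=0$ Fourier coefficient in the additive detection, and the principal character in the multiplicative detection) reproduce the expected main term for the sum and are absorbed into the stated bound.
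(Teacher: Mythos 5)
There is a genuine gap, and it lies precisely in the step you describe as most routine: assigning the Heath--Brown bilinear pieces to Proposition~\ref{bilinprop}. You write that after additive detection the contribution from each $h$ is ``precisely of the shape covered by \eqref{bilin2} of Proposition~\ref{bilinprop}.'' It is not. Proposition~\ref{bilinprop} bounds $\sum_{m,n}H_{T,U}(m,n)\,\mathrm{Kl}_2(amn;q)$ where $H_{T,U}$ is a pure product of smooth dilated weight functions and the oscillatory factor $e(cxy)$; there is no provision for arbitrary bounded arithmetic coefficients $\alpha_m,\beta_n$. After Heath--Brown one has $J$ rough coefficients $\alpha_i(m_i)$ (coming from M\"obius) and $J$ smooth factors $V_j(n_j/N_j)$, and the paper applies Proposition~\ref{bilinprop} only to the two largest \emph{smooth} variables $n_1,n_2$, treating $a=m_1\cdots m_J n_3\cdots n_J$ as a fixed unit modulo $q$ and summing trivially over the remaining $2J-2$ variables. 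That is exactly what produces the prefactor $X/N_1$ in \eqref{bd1}; your version, which bundles everything into a single pair $(m,n)$ with $M\le X^{1/2}\le N$, cannot invoke Proposition~\ref{bilinprop} at all unless by chance both grouped coefficients are smooth, which generically they are not.

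This misgrouping also corrupts the optimisation. You propose a three-way split by thresholds $N_1<N_2$ on a single variable $N$, but the paper's argument (following Fouvry--Kowalski--Michel) is driven by the position of the exponent vector $(\boldsymbol m,\boldsymbol n)$: either the largest smooth exponent $\nu_1\ge\beta$ (apply Proposition~\ref{bilinprop} to $n_1,n_2$); or $\nu_1<\beta$ and some subsum lies in $[x/6-\varepsilon,\,x/3+\varepsilon]$ (then that subsum defines a balanced factorisation fed into Theorem~\ref{FKM}); or $\nu_1<\beta$ and no subsum lies in that interval, which forces $\nu_1,\nu_2$ both large and permits the \emph{specific} factorisation $M=N_2$, $N=X/N_2$ to be fed into the smoothed Theorem~\ref{KMS}. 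The ``no subsum in $\mathcal J_x$'' alternative is the mechanism that produces a usable $M$ for the P\'olya--Vinogradov bound, and it is not recoverable from a one-parameter threshold on $N$. Relatedly, depth $k=6$ in Heath--Brown's identity is too shallow for the combinatorial step the paper actually runs (it uses $J=10$ and notes $J\ge 7$ suffices so that each $\mu_i\le x/J<x/6-\varepsilon$ automatically). Finally, there is no main term to ``reproduce'' in \eqref{smoothstat}: $\mathrm{Kl}_2$ is not a character in $m$ or $n$, so $h=0$ and the principal character are ordinary terms absorbed into the bound along with the rest; treating them as producing a main term is a misconception, albeit a harmless one here.
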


\begin{proof}
It is sufficient to estimate 
\begin{equation} \label{smoothv}
\mathcal{S}_{W,X}(\Lambda,\text{Kl}_2):=\sum_{n}  \mathbf{1}_{u \hspace{-0.25cm} \mod v}(n) \Lambda(n) \text{Kl}_2(n;q) W \Big(\frac{n}{X} \Big).
\end{equation}
We apply Heath Brown's identity \cite[Lemma~4.1]{FKM} and smooth partition of unity \cite[Lemme~2]{E} to \eqref{smoothv} with an integer parameter $J \geq 2$. This decomposes $\mathcal{S}_{W,X}(\Lambda,\text{Kl}_2)$ into a linear combination of $O(\log^{2J} X)$ sums with coefficients bounded by $O(\log X)$. These sums take the shape
\begin{multline*}
\Sigma(\boldsymbol{M}, \boldsymbol{N}):=\sum_{m_1, \ldots, m_J} \alpha_1(m_1) \cdots \alpha_J(m_J)  \sum_{n_1, \ldots, n_J} V_1 \Big( \frac{n_1}{N_1} \Big) \cdots V_J \Big( \frac{n_J}{N_J} \Big) \\
\times W \Big( \frac{m_1 \cdots m_J n_1 \cdots n_J}{X} \Big) \text{Kl}_2 (m_1 \cdots m_J n_1 \cdots n_J;q) \textbf{1}_{u \hspace{-0.25cm} \mod{v}}(m_1 \cdots m_J n_1 \cdots n_J), 
\end{multline*}
where
\begin{itemize}
\item $\boldsymbol{M}:=(M_1,\ldots,M_J)$ and $\boldsymbol{N}:=(N_1,\ldots,N_J)$ are $J$-tuples of parameters in $[0.95,1.05X]^{2J}$ which satisfy 
\begin{equation} \label{cond}
N_1 \geq N_2 \geq \cdots \geq N_J, \quad M_i \leq X^{1/J}, \quad M_1 \cdots M_J N_1 \cdots N_J \asymp_J X;
\end{equation}
\item the arithmetic functions $m_i \mapsto \alpha_i(m_i)$ are bounded and supported in $[0.95M_i,1.05M_i]$;
\item the smooth functions $x_i \mapsto V_i(x)$ satisfy \eqref{smoothhyp} with system of constants $\mathcal{C}(\varepsilon)$.
\end{itemize}  
Take $J=10$ and let $\mathcal{T}_2(\varepsilon)$ be as it appears in Proposition \ref{bilinprop}. In fact, any choice $J \geq 7$ will be sufficient for the combinatorial argument following \eqref{sig2}.  We now bound $\Sigma(\boldsymbol{M}, \boldsymbol{N})$ in three different ways. The three bounds will be optimised and the utility of each bound will depend on the position $(\boldsymbol{M},\boldsymbol{N})$ in the cube $[0.95,1.05X]^{20}$.

\subsection{Method 1}
Consider the vectors $(\boldsymbol{M},\boldsymbol{N})$ that satisfy
\begin{equation} \label{vecchoice}
\frac{N_1 N_2}{v (q^{\varepsilon} Q)^2} \geq 1 \quad \text{and} \quad \frac{N_1}{v} \geq 6.
\end{equation}
For such vectors, write the congruence condition in $\Sigma(\boldsymbol{M},\boldsymbol{N})$ using additive characters to obtain
\begin{multline} \label{addchar}
\Sigma(\boldsymbol{M}, \boldsymbol{N})=\frac{1}{v} \sum_{h=1}^v e \Big({-\frac{hu}{v}}  \Big) \sum_{m_1,\ldots,m_{10}} \alpha_1(m_1) \cdots \alpha_{10}(m_{10})  \sum_{n_1,\ldots,n_{10}} V_1 \Big( \frac{n_1}{N_1} \Big) \cdots V_{10} \Big( \frac{n_{10}}{N_{10}} \Big) \\
\times W \Big( \frac{m_1 \cdots m_{10} n_1 \cdots n_{10}}{X} \Big) \text{Kl}_2 (m_1 \cdots m_{10} n_1 \cdots n_{10};q) e \Big( \frac{h m_1 \cdots m_{10} n_1 \cdots n_{10}}{v} \Big).
\end{multline}
We can apply Proposition \ref{bilinprop} uniformly to the largest two smooth variables $n_1$ and $n_2$ in $\Sigma(\boldsymbol{M}, \boldsymbol{N})$. Note this process is uniform with respect to the variables $h,m_1,\ldots,m_{10},n_3,\ldots,n_{10}$. We have 
\begin{equation*}
a:=m_1 \cdots m_{10} n_3 \cdots n_{10} \quad \text{and} \quad c:=\frac{h m_1 \cdots m_{10} n_3 \cdots n_{10}}{v} \geq \frac{h}{v} \geq \frac{1}{v}. 
\end{equation*}
Observe that \eqref{cond} and \eqref{vecchoice} imply that
\begin{equation*}
\Big (\frac{h m_1 \cdots m_{10} n_3 \cdots n_{10}}{v},q,N_2,N_1,Q \Big) \in \mathcal{T}_2(\varepsilon).
\end{equation*}
Applying Proposition~\ref{bilinprop} and estimating trivially with respect to the remaining variables we obtain
\eqref{vecchoice} we obtain 
\begin{align}
\Sigma(\boldsymbol{M}, \boldsymbol{N}) & \ll \frac{X(q^{\varepsilon} Q)^2 q^{\frac{1}{2}}}{v N_1} \sum_{h=1}^v \Big(1+\frac{Q^4 v}{hN_1 N_2^2}   \Big) \nonumber \\ 
& \ll \frac{X q^{\frac{1}{2}} (q^{\varepsilon} Q)^2}{N_1} \Big(1+\frac{Q^4}{N_1 N_2^2} \Big). \label{bd1} 
\end{align}

\subsection{Methods 2 and 3}
In the second and third methods we write the congruence condition in $\Sigma(\boldsymbol{M},\boldsymbol{N})$ using multiplicative characters modulo $v$. Then 
\begin{multline} \label{mult}
\Sigma(\boldsymbol{M}, \boldsymbol{N})=\frac{1}{\phi(v)} \sum_{\chi \mod{v}} \chi(\overline{u}) \sum_{\substack{m_1, \ldots,m_{10} \\ n_1, \ldots, n_{10}}} \alpha_{\chi,1}(m_1) \cdots \alpha_{\chi,10}(m_{10}) \beta_{\chi,1} (n_1) \cdots \beta_{\chi,10}(n_{10}) \\
\times W \Big( \frac{m_1 \cdots m_{10} n_1 \cdots n_{10}}{X} \Big) \text{Kl}_2 (m_1 \cdots m_{10} n_1 \cdots n_{10};q) 
\end{multline}
where 
\begin{equation*}
\alpha_{\chi,j}(m_j):=\chi(m_j) \alpha(m_j)  \quad \text{and} \quad \beta_{\chi,j}(n_j):=\chi(n_j) V \Big( \frac{n_j}{N_j} \Big).
\end{equation*}

For the second method, apply Theorem \ref{FKM} to obtain
\begin{equation} \label{bd2}
\Sigma(\boldsymbol{M}, \boldsymbol{N}) \ll q^{\varepsilon} Q^{\frac{1}{2}} X \Big(\frac{1}{M^{\frac{1}{2}}}+\frac{q^{\frac{1}{4}}}{(X/M)^{\frac{1}{2}}}  \Big),
\end{equation}
for any factorisation
\begin{equation} \label{factor}
X \asymp M_1 \cdots M_{10} N_1 \cdots N_{10}=M N.
\end{equation}

For the third method, apply Theorem \ref{KMS}. Since $X \gg q^{\frac{3}{5}}$ by hypothesis and \eqref{factor} holds, we have $MN>q^{\frac{1}{4}}$ for all sufficiently large primes $q$.  We also have $MN \ll q$, so $MN<q^{\frac{5}{4}}$ for all sufficiently large primes $q$. We obtain
\begin{equation} \label{bd3}
\Sigma(\boldsymbol{M}, \boldsymbol{N}) \ll (q^{\varepsilon} Q)^2 X \Big(\frac{1}{M^{\frac{1}{2}}}+X^{-\frac{3}{16}} q^{\frac{11}{64}}   \Big)
\end{equation}
for any factorisation \eqref{factor} such that 
\begin{equation} \label{speccond}
1 \leq  M \leq Nq^{\frac{1}{4}}.
\end{equation}

\subsection{Optimisation}
Introduce real numbers $\kappa, \theta, x,\mu_i$ and $\nu_j$ with $1 \leq i,j \leq 10$, defined by
\begin{equation} \label{paramstar}
Q=q^{\kappa}, \quad v=q^{\theta}, \quad X=q^x, \quad M_i=q^{\mu_i}, \quad \text{and} \quad N_j=q^{\nu_j}.
\end{equation}
We set 
\begin{equation*}
(\boldsymbol{m},\boldsymbol{n}):=(\mu_1,\ldots,\mu_{10},\nu_1,\ldots,\nu_{10}) \in [0,x]^{20}.
\end{equation*}
The conditions \eqref{cond} when $J=10$ can be restated as 
\begin{equation} \label{restate}
 \sum_i \mu_i+\sum_j \mu_j=x \leq 1, \quad \mu_i \leq \frac{x}{10}, \quad \text{and} \quad \nu_1 \geq \nu_2 \geq \cdots \geq \nu_{10}.
\end{equation}
Let $\beta$ be a parameter that satisfies 
\begin{equation} \label{betabd}
\max \big (4 \kappa+\theta+4 \varepsilon, \frac{5x}{12}+\varepsilon \big ) \leq \beta \leq  \frac{5x}{6}.
\end{equation}
We choose $\beta$ at a later point in \eqref{choicebeta}. Note that the hypothesis $X \geq v^{\frac{6}{5}} (q^{\varepsilon}Q)^{\frac{24}{5}}$ ensures $\beta$ is well-defined. 

\subsubsection{Large $N_1$}
Suppose $\nu_1 \geq \beta$. Then \eqref{vecchoice} holds. Thus we may apply \eqref{bd1} and the fact $N_2 \geq 1/2$ to obtain 
\begin{equation} \label{sig1}
\Sigma(\boldsymbol{M}, \boldsymbol{N}) \ll (qQ)^{\varepsilon} Q^2 X q^{\frac{1}{2}-\beta}.
\end{equation}

\subsubsection{Small $N_1$}
Suppose $\nu_1 \leq {\beta}$. Note that the estimate in \eqref{smoothstat} is trivial for $x \leq 3/4$, so we may assume $x>3/4$. Define the function $\eta(\boldsymbol{m},\boldsymbol{n})$ by  
\begin{equation*}
\eta(\boldsymbol{m},\boldsymbol{n}):=\min \Big( \frac{\sigma}{2}, \frac{x-\sigma}{2}-\frac{1}{4}  \Big)-\frac{\kappa}{2},
\end{equation*}
where $\sigma$ ranges over all possible sub-sums of the $\mu_i$ and $\nu_j$ for $1 \leq i,j \leq 10$, i.e., over the sums 
\begin{equation*}
\sigma=\sum_{i \in \mathcal{I}} \mu_i+\sum_{j \in \mathcal{J}} \nu_j,
\end{equation*}
for $\mathcal{I}$ and $\mathcal{J}$ ranging over all possible subsets of $\{1,\ldots,10\}$. Thus \eqref{bd2} can be rewritten as 
\begin{equation} \label{combd}
\Sigma(\boldsymbol{M}, \boldsymbol{N}) \ll (qQ)^{\varepsilon} q^{-\eta(\boldsymbol{m},\boldsymbol{n})} X.
\end{equation}
Let
\begin{equation*}
\mathcal{J}_x:= \Big [\frac{x}{6}-\varepsilon,\frac{x}{3}+\varepsilon \Big].
\end{equation*}

If $(\boldsymbol{m},\boldsymbol{n})$ contains a subsum $\sigma \in \mathcal{J}_x$, then we have 
\begin{equation*}
\eta(\boldsymbol{m},\boldsymbol{n}) \geq \min \Big( \frac{x/6}{2},\frac{x-x/3}{2}-\frac{1}{4}  \Big)-\frac{\kappa}{2}-\frac{\varepsilon}{2}.
\end{equation*}
This becomes 
\begin{equation*}
\eta(\boldsymbol{m},\boldsymbol{n}) \geq \frac{x}{3}-\frac{1}{4}-\frac{\kappa}{2}-\frac{\varepsilon}{2}.
\end{equation*}
In this case \eqref{combd} asserts 
\begin{equation} \label{sig2}
\Sigma(\boldsymbol{M},\boldsymbol{N}) \ll (qQ)^{\varepsilon} Q^{\frac{1}{2}} X^{\frac{2}{3}} q^{\frac{1}{4}}.
\end{equation}

If $(\boldsymbol{m},\boldsymbol{n})$ contains no sub-sum $\sigma \in \mathcal{J}_x$, then the sum of all $\mu_i$ and $\nu_j$ which are less than $x/6-\varepsilon$ is also less than $x/6-\varepsilon$. This follows from the inequality $2(x/6-\varepsilon)<x/3+\varepsilon$. Thus \eqref{restate} implies all $\mu_i<x/6-\varepsilon$ (this is where the fact $J \geq 7$ is used). Thus some of the $\nu_j$ must be greater than $x/3+\varepsilon$. Since $3(x/3+\varepsilon)>1$, at most two of the $\nu_j$ ($\nu_1$ or $\nu_1$ and $\nu_2$) are greater than $x/3+\varepsilon$.  Thus 
\begin{equation*}
\nu_1+\nu_2 \geq x-\big(\frac{x}{6}-\varepsilon \big)=\frac{5x}{6}+\varepsilon.
\end{equation*}
Now, the condition $\nu_1 \leq \beta$ and \eqref{cond} implies that 
\begin{equation} \label{chain}
 \frac{5x}{6}-\beta+\varepsilon \leq \nu_2 \leq \nu_1 \leq \beta.
\end{equation}
Observe that \eqref{chain} is well-defined because \eqref{betabd} holds. We may apply \eqref{bd3} to bound $\Sigma(\boldsymbol{M},\boldsymbol{N})$ with choices
\begin{equation*}
M:=N_2 \quad \text{and} \quad N:=\frac{X}{N_2},
\end{equation*}
since $q^{\frac{3}{4}} \leq X \leq q$ and $1 \leq M \leq Nq^{\frac{1}{4}}$ (the last condition is trivial). We obtain
\begin{equation} \label{sig3}
\Sigma(\boldsymbol{M}, \boldsymbol{N}) \ll (q^{\varepsilon} Q)^2 \big( X^{\frac{7}{12}} q^{\frac{\beta}{2}}  + q^{\frac{11}{64}} X^{\frac{13}{16}} \big).
\end{equation}

\subsection{Combining the bounds}
We now sum over all $(\boldsymbol{m},\boldsymbol{n})$. Combining \eqref{sig1}, \eqref{sig2} and \eqref{sig3} yields
\begin{equation} \label{total}
\mathcal{S}_{W,X}(\Lambda,\text{Kl}_2) \ll  (qQ)^{\varepsilon} \big( Q^2 X q^{\frac{1}{2}-\beta}+Q^{\frac{1}{2}} X^{\frac{2}{3}} q^{\frac{1}{4}}+Q^2 X^{\frac{7}{12}} q^{\frac{\beta}{2}} + Q^2 q^{\frac{11}{64}} X^{\frac{13}{16}} \big).
\end{equation}
Recall the parameters defined in \eqref{paramstar}. To balance the first and third terms of \eqref{total} we set
\begin{equation} \label{choicebeta}
\beta:=\frac{1}{3}+\frac{5}{18}x.
\end{equation}
Such a $\beta$ satisfies \eqref{betabd} as long as 
\begin{equation*}
x \geq \max  \Big \{\frac{3}{5},  \frac{18}{5} \theta+\frac{72 \kappa}{5}+\frac{72}{5} \varepsilon-\frac{6}{5} \Big \},
\end{equation*}
which is guaranteed by \eqref{Xhypstat}. Thus \eqref{total} becomes
\begin{equation} \label{total2}
\mathcal{S}_{W,X}(\Lambda,\text{Kl}_2) \ll (qQ)^{\varepsilon} \big(Q^2 X^{\frac{13}{18}} q^{\frac{1}{6}}+Q^{\frac{1}{2}} X^{\frac{2}{3}} q^{\frac{1}{4}} + Q^2 q^{\frac{11}{64}} X^{\frac{13}{16}} \big).
\end{equation}
\end{proof}

\section{Proof of Main Theorem} \label{mainthms}
We now apply the result of Section \ref{smoothpropsec} with an appropriate choice of parameters to prove the main theorem.
\begin{proof}[Proof of Theorem \ref{nonsmooththm}]
Note that \eqref{tradstat} is trivial for $X \ll q^{\frac{11}{12}}$, so we may assume that $X \gg  q^{\frac{11}{12}}$. First recall that $\varepsilon>0$ is small, $q \in \mathbb{P}$, $X \geq 1$ and $v \in \N$ are such that
\begin{equation} \label{paramhyp}
q^{\frac{11}{12}} \ll X \leq q \quad \text{and} \quad 1 \leq v \leq q^{\frac{1}{100}}.
\end{equation}
Furthermore, let
\begin{equation} \label{Qhyp}
40 \leq Q \leq X^{\frac{1}{16}}.
\end{equation}
It is clear that \eqref{paramhyp} and \eqref{Qhyp} imply that \eqref{Xhypstat} is satisfied. Let $\Delta:=Q^{-1}$ and suppose $W:[0,\infty] \rightarrow \R$ is smooth and satisfies 
\begin{align*}
\text{supp} \subset \big[0.975-\Delta, 1.025+ \Delta \big],  & \quad 0 \leq W \leq 1,  \quad  W(x)=1 \quad \text{for} \quad 0.975 \leq x \leq 1.025 \\
\quad \text{and} \quad & x^j W^{(j)}(x)  \ll_{\varepsilon,j} Q^j
\end{align*}
with system of constants $\mathcal{C}(\varepsilon)$. Applying Proposition \ref{smoothprop} we obtain 
\begin{align}
\sum_{\substack{0.975X<p \leq 1.025 X \\ p \equiv u \hspace{-0.15cm} \mod{v} }} \text{Kl}_2(p;q) & \ll \Delta X+1+ \Bigg | \sum_{\substack{p \equiv u \hspace{-0.25cm} \mod v \\ p \text{ prime} }} W \Big( \frac{p}{X} \Big) \text{Kl}_2(p;q) \Bigg |  \nonumber \\
& \ll Q^{-1} X+(qQ)^{\varepsilon} \big(Q^2 X^{\frac{13}{18}} q^{\frac{1}{6}}+Q^{\frac{1}{2}} X^{\frac{2}{3}} q^{\frac{1}{4}}+ Q^2 q^{\frac{11}{64}} X^{\frac{13}{16}} \big) \label{balance}. 
\end{align}
We choose $Q:=q^{-\frac{11}{192}} X^{\frac{1}{16}} \leq X^{\frac{1}{16}}$ to balance the first and fourth terms of \eqref{balance}. Since $X \gg q^{\frac{11}{12}}$ we have $Q \geq 40$. Thus 
\begin{align*}
\sum_{\substack{0.975 X<p \leq 1.025 X  \\ p \equiv u \hspace{-0.15cm} \mod{v}}} \text{Kl}_2(p;q) & \ll q^{\varepsilon} \big( q^{\frac{11}{192}} X^{\frac{15}{16}}+q^{\frac{5}{96}} X^{\frac{61}{72}}+q^{\frac{85}{384}} X^{\frac{67}{96}} \big) \\
& \ll  q^{\frac{11}{192}+\varepsilon} X^{\frac{15}{16}},
\end{align*}
where the last line follows from $q^{\frac{11}{12}} \ll X \leq q$. Summing dyadically yields the result.
\end{proof}

\section{Acknowledgments}
The authors thank Scott Ahlgren and the referee for their meticulous comments on the manuscript.


\begin{thebibliography}{99}
\bibitem{BFKPM} 
\textsc{V.~Blomer, \'{E}.Fouvry, E.~Kowalski, P.~Michel and D.~Mili\'{c}evi\'{c}},
\emph{Some applications of smooth bilinear forms with Kloosterman sums}. (Russian)
Tr. Mat. Inst. Steklova 296 (2017), Analiticheskaya i Kombinatornaya Teoriya Chisel,
24--35. ISBN: 5-7846-0141-5; 978-5-7846-0141-4 

\bibitem{BFKPM2} 
\textsc{V.~Blomer, \'{E}.Fouvry, E.~Kowalski, P.~Michel and D.~Mili\'{c}evi\'{c}},
\emph{Some applications of smooth bilinear forms with Kloosterman sums}.
preprint (English version)

\bibitem{BKY}
\textsc{V.~Blomer, R.~Khan and M.~Young},
\emph{Distribution of holomorphic cusp forms}.
Duke Math J. \textbf{162} (2013), no. 14., 2609--2644. 

\bibitem{E}
\textsc{E.~Fouvry},
\emph{Sur le probl\'{e}me des diveseurs de Titchmarsh}.
J. Reine Angew. Math. 357 (1985), 51--76.

\bibitem{FKM}
\textsc{\'{E}~.Fouvry, E.~Kowalski and P.~Michel},
\emph{Algebraic trace functions over the primes}.
Duke Math J. \textbf{163} (2014), no. 9, 1683--1736.

\bibitem{FKM2}
\textsc{\'{E}.~Fouvry, E.~Kowalski and P.~Michel},
\emph{On the exponent of distribution of the ternary divisor function}.
Mathematika. \textbf{61} (2015) no.1, 121--144. 


\bibitem{FKM3}
\textsc{\'{E}.~Fouvry, E.~Kowalski and P.~Michel},
\emph{Algebraic twists of modular forms and Hecke orbits},
Geom. Funct. Anal. \textbf{25} (2015) no.2, 580--657. 

\bibitem{FM}
\textsc{\'{E}.~Fouvry and P.~Michel},
\emph{Sur certaines sommes d'exponentielles sur les nombres premiers},
Ann. Sci. Ecol\'{e} Norm. Sup.  (4) \textbf{31} (1998), no. 1, 93--130.

\bibitem{Hor}
\textsc{L.~H\"{o}rmander}
\emph{The analysis of linear partial differential operators. I},
Classics in Mathematics, Springer-Verlag, Berlin, 2003, Distribution theory and Fourier analysis, Reprint of (1990) edition 
 [Springer, Berlin; MR1065993 (91m:35001a)]. MR 1996773

\bibitem{HB}
\textsc{D.R.~Heath--Brown},
\emph{Prime numbers in short intervals and a generalized Vaughan identity}.
Canad. J. Math. \textbf{34} (1982), no. 6, 1365--1377. 

\bibitem{HB2}
\textsc{D. R. Heath Brown}, 
Arithmetic applications of Kloosterman sums,
\emph{Nieuw Arch. Wiskd.}, (5), 1(4):380--384, 2000.

\bibitem{IK}
\textsc{H.~Iwaniec and E.~Kowalski},
\emph{Analytic number theory}.
American Mathematical Society Colloquium Publications, 53.
\emph{American Mathematical Society, Providence, RI, 2004. xii+615 pp.
ISBN} 0-8218-3633-1

\bibitem{K}
\textsc{N.~M.~Katz},
Gauss sums, Kloosterman sums, and monodromy groups,
\emph{Annals of Mathematics Studies}, vol. 116, Princeton University Press, Princeton, NJ, 1988. 

\bibitem{KPY}
\textsc{E.~Kiral, I.~Petrow and M.~Young},
\emph{Oscillatory integrals with uniformity in parameters}.
preprint.

\bibitem{KMS}
\textsc{E.~Kowalski, Ph.~Michel and W.~Sawin},
\emph{Bilinear forms with Kloosterman sums and applications}.
Ann. of Math. (2) \textbf{186} (2017), no. 2, 413--500.

\bibitem{S}
\textsc{P.~Sarnak},
Additive number theory and Maass forms.
In \emph{Number theory (New York,1982)}, volume 1052 of \emph{Lecture notes in Math.}, pages 286--309. Springer, Berlin, 1984.

\bibitem{IZ}
\textsc{I.~Shparlinski and T.~Zhang},
\emph{Cancellations amongst Kloosterman sums}.
Acta Arith. \textbf{176} (2016), no.  3, 201--210.

\bibitem{St}
\textsc{E.~Stein},
\emph{Harmonic analysis: real-variable methods, orthogonality, and oscillatory integrals},
Princeton Mathematical Series, vol. 43, Princeton University Press, Princeton, NJ, 1993, With the
assistance of Timothy S. Murphy, Monographs in Harmonic Analysis, III. MR 1232192

\bibitem{Zwor}
\textsc{M.~Zworski},
\emph{Semiclassical analysis},
vol.138, American Mathematical Society, Providence, RI, 2012. MR 2952218 

\end{thebibliography}
\end{document}